\newcommand{\boL}{\mathcal{L}}
\newcommand{\boC}{\mathcal{C}}
\newcommand{\boK}{\mathcal{K}}
\newcommand{\boI}{\mathcal{I}}
\newcommand{\R}{\mathbb{R}}
\newcommand{\boW}{\mathcal{W}}
\newcommand{\boF}{\mathcal{F}}
\newcommand{\boA}{\mathcal{A}}
\newcommand{\boP}{\mathcal{P}}
\newcommand{\boS}{\mathcal{S}}
\newcommand{\dq}{\dot{q}}
\newcommand{\dpp}{\dot{p}}
\newcommand{\AP}{\textnormal{\textbf{AP}}}
\newtheorem{theorem}{Theorem}[section]
\newtheorem{lemma}{Lemma}[section]
\newtheorem{proposition}{Proposition}[section]
\theoremstyle{definition}
\newtheorem{remark}{Remark}[section]
\newtheorem{definition}{Definition}[section]
\title{Action-minimizing periodic orbits of the Lorentz force equation with dominant vector potential}
\author{ Manuel Garz\'on\footnote{Instituto de Ciencias Matemáticas, Consejo Superior de Investigaciones Científicas, 28049 Madrid, Spain. \\E-mail: {\tt manuel.garzon@icmat.es}; ORCID: {\tt 0000-0003-4611-9181}}, Salvador L\'opez-Mart\'inez\footnote{Departamento de Matem\'aticas, Universidad Aut\'onoma de Madrid, Ciudad Universitaria de Cantoblanco, 28049, Madrid, Spain. 
		E-mail: {\tt salvador.lopez@uam.es}; ORCID: {\tt 0000-0002-5731-2385}}}
\begin{document}

\maketitle

\medskip
	
\begin{abstract}
We establish the existence of non-constant periodic solutions to the Lorentz force equation, where no scalar potential is needed to induce the electromagnetic field. Our results extend to cases where a possibly singular scalar potential is present, although the vector potential assumes a leading role. The approach is based on minimizing the action functional associated with the relativistic Lagrangian. The compactness of the minimizing sequences requires the existence of negative values for the functional, which is proven using novel ideas that exploit the sign-indefinite nature of the term involving the vector potential.
\end{abstract}

\noindent{\em Keywords:}
		Lorentz force equation; non-constant periodic solutions; Maxwell's equations; relativistic Lagrangian; action minimization; dominant vector potential.
		
		\medskip
		\noindent{\emph{MSC 2020}:}
        34C25; 37J51; 78M30.

\section{Introduction}
The Lorentz force equation models the relativistic dynamics of a slowly accelerated charged particle in an electromagnetic field. In this context, let $q(t):[0,T]\rightarrow\R^3$ denote the position of the particle at time $t$, and let $\dq(t)$ denote its velocity. The electric and magnetic fields are represented by $E:[0,T]\times U\rightarrow\R^3$ and $B:[0,T]\times U\rightarrow\R^3$, respectively, with $U\subset\R^3$ an open set. For simplicity, and without loss of generality, we normalize both the speed of light in vacuum and the charge-to-mass ratio to one. Under these assumptions, the system under consideration takes the form
\begin{equation}\label{eq:LFE}
    \dfrac{d}{dt}\left(\frac{\dq(t)}{\sqrt{1-|\dq(t)|^2}}\right)=E(t,q(t)) + \dq(t)\times B(t,q(t)),
\end{equation}
where the right-hand side corresponds to the electromagnetic Lorentz force, and the left-hand side represents the relativistic acceleration of the particle. In this work, we are interested in periodic solutions to \eqref{eq:LFE}, that is periodic functions $q\in\boC^2(\R;U)$ with $\|\dq\|_\infty<1$, and satisfying the equation.

Equation~\eqref{eq:LFE} was independently derived in the early past century by Poincaré~\cite{Po2} and Planck~\cite{Planck}, and it stands as one of the fundamental equations in Mathematical Physics \cite{Gri, Jac}. Associated with \eqref{eq:LFE} is the Lagrangian $\boL$ defined by
\begin{equation}\label{Lagrangian}
	\boL(t,q,\dq) = 1 - \sqrt{1 - |\dq|^2} + \dq\cdot A(t,q) -\Phi(t,q),
\end{equation}
where $A:[0,T]\times U\to\R^3$ and $\Phi:[0,T]\times U\to\R$ are smooth functions forming the electromagnetic potential, i.e.,
\begin{equation}\label{Potential-Fields}
B(t,q)=\nabla\times A (t,q),\qquad E(t,q)=-\partial_t A(t,q)-\nabla\Phi(t,q).
\end{equation}
Under suitable conditions, the pair $(E,B)$ obtained from $(A,\Phi)$ solves Maxwell's equations uniquely for a specific distribution of charge and current. Here, the operators $\nabla$ and $\nabla\times$ respectively denote the gradient and the rotational with respect to the spatial variables, while $\partial_t$ stands for the time partial derivative.

From a mathematical perspective, there has been a lack of qualitative results on the dynamics of \eqref{eq:LFE} until recent years, despite its historical relevance. In particular, in 2019, Arcoya, Bereanu and Torres established in \cite{ABT} the first rigorous critical point theory for the action functional associated with \eqref{Lagrangian}. This work is based on Szulkin's variational framework \cite{Szu} for functionals having a regular part plus a lower semi-continuous term. The results in \cite{ABT} yield the existence of periodic solutions of Equation \eqref{eq:LFE} for a large class of non-singular electromagnetic fields, i.e. for those admitting a smooth extension to $[0,T]\times \R^3$ (see also \cite{Ber2}). Additionally, still in the non-singular regime, the same authors provided in \cite{ABT2} a Lusternik--Schnirelmann multiplicity theory.

Concerning the dynamics in singular fields, the existence of closed trajectories has been established for isolated singularities both by variational methods \cite{AS,Ber1,BosDamPap} and by global continuation using topological degree \cite{GT}. These works cover physically relevant models such as scalar potentials with Coulomb-type singularities, magnetic dipoles, Liénard--Wiechert potentials, among others.

In the previous literature, the existence results just outlined critically rely on the scalar potential $\Phi(t,x)$ exerting some form of control over the vector potential $A(t,x)$, regardless of the different nonlinear analysis techniques employed in these works. In particular, an everywhere zero $\Phi$ is never an admissible choice in the mentioned results. In contrast, we emphasize that any electric current through a wire induces an electromagnetic field (as a solution of Maxwell’s equations) that is completely determined by the relation \eqref{Potential-Fields} for some vector potential $A(t,x)$, with $\Phi\equiv0$. The reader may check the mathematical details in \cite{GM, Gri, Jac}. Concerning this, for the Newtonian counterpart of \eqref{eq:LFE}, a wide range of dynamical and periodic phenomena associated with wire-current configurations has been established in \cite{ALP, GM, GP1, GP2, VBB}, among others. Complementarily, \cite{GT2} is the only available work addressing the dynamics generated by such configurations in the relativistic framework of \eqref{eq:LFE}. In that paper, the authors proved the existence of solutions exhibiting radial periodic motion around an infinitely thin, straight, and infinitely long wire carrying a time-dependent current. 

In summary, the relativistic dynamics of a charged particle induced by a vector potential remains largely unexplored. With this motivation, the present paper establishes the first qualitative  results of the existence of closed trajectories in electromagnetic fields mainly driven by a vector potential. More concretely, Theorem~\ref{MainTheorem1} focuses on the case $\Phi\equiv0$ with $A(t,x)$ such that the field given by \eqref{Potential-Fields} is continuous in all variables, $T$-periodic in time, and uniformly decays to zero at infinity. From this class of vector potentials, we extend Theorem~\ref{MainTheorem1} to the general setting $\Phi\not\equiv0$ in both Theorem~\ref{MainTheorem2} and Theorem~\ref{MainTheorem3}, by establishing a control of $A$ over $\Phi$ that also includes the case $\Phi\equiv0$. Moreover, the hypotheses on $\Phi$ allow the consideration of singularities of Coulomb-type as in \cite{AS,Ber1}, while the vector potential remains smooth. 

Concerning the techniques, the solutions to Equation~\eqref{eq:LFE} are obtained as critical points of the action functional associated with the Lagrangian~\eqref{Lagrangian}, thus continuing the line of investigation initiated in~\cite{ABT}. More specifically, we identify conditions (all of them assuming a dominant role of the vector potential \( A \) relative to the scalar potential \( \Phi \)) under which the action functional admits a global minimizer. 

The key ingredients for establishing the boundedness of minimizing sequences are twofold: first, the decay of the electromagnetic field at infinity ensures that the action functional is (infinitesimally) non-negative at infinity, and second, the infimum of the functional is negative. It is worth emphasizing that, even in the simpler case \( \Phi \equiv 0 \), the existence of negative values of the functional is far from trivial, since the Lagrangian~\eqref{Lagrangian} consists of the sum of a non-negative term and an indefinite term. Our proofs of the negativity of the infimum exploit the specific structure of the first-order term in \eqref{Lagrangian} involving \( A \), while the relativistic term could be replaced  with the Newtonian operator or more general nonlinear operators. For this reason, we believe our results may be of broader relevance to minimization theory. The details of the minimization procedure are presented in Section~\ref{Section:Functional}.

We also emphasize that the solutions we obtain are all non-constant vectors of $\R^3$. Concerning this, the existence of non-constant periodic solutions for the Lorentz force equation only has been established in \cite[Theorem {10}]{ABT}, \cite[Theorem {3.7}]{AS} and \cite[Theorem 1]{GT}.

The structure of the paper is as follows. In Section~\ref{Section:Statements}, we introduce the hypotheses on the potentials and state the main results. Section~\ref{Section:Functional} is devoted to establishing a minimization result, conditional on the existence of negative values of the action functional. Section~\ref{SectionMinimum} includes the proof of Theorem~\ref{MainTheorem1}, which focuses on the minimization of the action functional in the absence of a scalar potential. In Section~\ref{Section-Phi}, we prove Theorem~\ref{MainTheorem2} and Theorem~\ref{MainTheorem3}, which extend Theorem~\ref{MainTheorem1} to more general settings where a scalar potential is present but controlled by a vector potential. In Section~\ref{Section-conclusions}, we discuss about the necessity of the assumptions made throughout the paper. Finally, the Appendix provides an uniform a priori bound for $|\dq|$ in Equation \eqref{eq:LFE} in the non-singular regime, which shows that there cannot exist a sequence of periodic solutions whose velocity approaches the speed of light in vacuum.

\section{Hypotheses and main results}\label{Section:Statements}
As previously mentioned, our interest lies in the role of the vector potential in the existence of periodic trajectories of charged particles. To this end, we begin by considering electromagnetic fields in \eqref{Potential-Fields} with $\Phi \equiv 0$, which reduces Equation \eqref{eq:LFE} to
\begin{equation}\label{eq:LFE0}
	\dfrac{d}{dt}\left(\frac{\dq(t)}{\sqrt{1-|\dq(t)|^2}}\right)=-\partial_t A(t,q(t)) + \dq(t)\times \nabla\times A(t,q(t)).
\end{equation}
After fixing $T>0$, we begin by stating our first main result, which establishes the existence of periodic solutions of \eqref{eq:LFE0} for a large class of electromagnetic fields uniquely determined by a vector potential $A\in\boC^1(\R^4;\R^3)$.
\begin{theorem}\label{MainTheorem1}
	Let $A\in\boC^1\left(\R^4;\R^3\right)$ be non-autonomous, $T$-periodic in its first variable, and with uniformly asymptotic decay at infinity, i.e.
	\begin{equation}\label{def:periodic}
		\partial_tA\not\equiv0,\quad A(0,x)=A(T,x),\ \hbox{for all }x\in\R^3,
	\end{equation}
	and
	\begin{equation}\label{cond:decayA}
		\lim_{|x|\to\infty} \big(\left|\partial_t A(t,x)\right| + |\nabla A(t,x)|\big)=0,\quad\text{uniformly in }t.
	\end{equation}
	Then the Lorentz force equation \eqref{eq:LFE0} possesses at least one non-constant periodic solution ${\bf q}$. Moreover,
	\begin{equation}
		\|\dot{{\bf q}}\|_\infty<\rho,
	\end{equation}
	for some $\rho\in(0,1)$ independent of ${\bf q}$.
\end{theorem}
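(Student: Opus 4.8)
The plan is to obtain $\mathbf q$ as a global minimizer of the action functional
\[
\boI(q)=\int_0^T\boL(t,q,\dq)\,dt
=\int_0^T\Big(1-\sqrt{1-|\dq|^2}+\dq\cdot A(t,q)\Big)\,dt
\]
on the natural space of $T$-periodic Lipschitz loops with $\|\dq\|_\infty\le 1$, following the Szulkin-type framework already alluded to and, in particular, invoking the conditional minimization result of Section~\ref{Section:Functional}. That result presumably reduces the theorem to two things: (i) the functional is ``non-negative at infinity'' in the sense that $\boI(q)\ge -o(1)$ as $\|q\|\to\infty$ in a suitable (e.g. $L^\infty$) norm along admissible loops, and (ii) $\inf\boI<0$. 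Granting the abstract machinery, the whole content of the proof is to verify these two hypotheses for the functional above, and then to upgrade the resulting minimizer from a weak/generalized solution to a genuine $\boC^2$ solution with $\|\dot{\mathbf q}\|_\infty<\rho<1$.

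For (i): split a loop as $q=\bar q+\tilde q$ with $\bar q$ its mean and $\tilde q$ of zero mean. Since the relativistic kinetic term is non-negative, it suffices to control $\int_0^T\dq\cdot A(t,q)\,dt$ from below. Write $\int_0^T\dq\cdot A(t,q)\,dt=\int_0^T\dq\cdot\big(A(t,\bar q+\tilde q)-A(t,\bar q)\big)\,dt$ (the subtracted piece integrates to zero because $\tilde q$ has zero mean and $t\mapsto A(t,\bar q)$ is $T$-periodic — here \eqref{def:periodic} matters). Then $\big|\int_0^T\dq\cdot(A(t,q)-A(t,\bar q))\,dt\big|\le \|\nabla A\|_{L^\infty([0,T]\times B)}\int_0^T|\dq|\,|\tilde q|\,dt$ where $B$ is a ball around $\bar q$ of radius $\|\tilde q\|_\infty$; using $\|\dq\|_\infty\le1$ and the decay \eqref{cond:decayA}, as $|\bar q|\to\infty$ the constant $\|\nabla A\|_{L^\infty}$ tends to $0$, while $\int_0^T|\tilde q|\,dt\le T\|\tilde q\|_\infty$ is controlled by $\int_0^T|\dq|\,dt\le T$ (Wirtinger/Poincaré on loops). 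Hence the indefinite term is $o(1)$ as the loop escapes to infinity, which is exactly the ``infinitesimal non-negativity at infinity'' needed. A separate, easier check handles loops staying in a bounded region with $\|\tilde q\|\to\infty$: then $\|\dq\|_{L^1}$ stays bounded by $T$ while the kinetic term is coercive enough in the relevant topology, so no escape occurs there.

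For (ii), the genuinely novel point: we must exhibit an admissible loop with $\boI<0$ despite $1-\sqrt{1-|\dq|^2}\ge0$. Exploit that $\partial_t A\not\equiv0$. Fix a point $x_0$ and a direction where $\partial_t A(t,x_0)$ is non-trivial; consider the one-parameter family of small loops $q_\varepsilon(t)=x_0+\varepsilon\,\xi(t)$ with $\xi$ a fixed $T$-periodic zero-mean curve to be chosen. Expanding, $\int_0^T\dot q_\varepsilon\cdot A(t,q_\varepsilon)\,dt=\varepsilon\int_0^T\dot\xi\cdot A(t,x_0)\,dt+\varepsilon^2\int_0^T\dot\xi\cdot\big(\nabla A(t,x_0)\xi\big)\,dt+O(\varepsilon^3)$; the $O(\varepsilon)$ term vanishes after integrating by parts ($\int\dot\xi\cdot A(t,x_0)=-\int\xi\cdot\partial_tA(t,x_0)$, still possibly nonzero — so in fact one keeps it), so the leading behaviour is $-\varepsilon\int_0^T\xi\cdot\partial_t A(t,x_0)\,dt$, which one can make strictly negative by choosing $\xi$ appropriately (e.g. matching the first Fourier mode of $t\mapsto\partial_t A(t,x_0)$, which is nonzero for a suitable $x_0$ precisely because $\partial_tA\not\equiv0$). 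Meanwhile the kinetic term is $O(\varepsilon^2)$. Therefore $\boI(q_\varepsilon)=-c\,\varepsilon+O(\varepsilon^2)<0$ for small $\varepsilon>0$, and the loop is non-constant, hence the minimizer is non-constant too (as $\boI(\text{const})=0$). I expect this step — choosing $x_0$ and $\xi$ and making the sign rigorous, rather than the abstract scheme — to be the main obstacle, together with the bookkeeping that shows the chosen test loop is admissible ($\|\dot q_\varepsilon\|_\infty=\varepsilon\|\dot\xi\|_\infty<1$).

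Finally, regularity and the speed bound: once a minimizer $\mathbf q$ exists, the Euler--Lagrange equation in Szulkin's sense gives that $\mathbf q$ solves \eqref{eq:LFE0} in a weak/integrated form; a bootstrap using $A\in\boC^1(\R^4;\R^3)$ promotes $\mathbf q$ to $\boC^2$, and the a~priori bound proven in the Appendix (the uniform bound on $|\dq|$ for solutions of \eqref{eq:LFE} in the non-singular regime) furnishes the constant $\rho\in(0,1)$ with $\|\dot{\mathbf q}\|_\infty<\rho$ independently of $\mathbf q$. The only subtlety is ensuring the minimizer is a true critical point and not a spurious boundary point of $\{\|\dq\|_\infty\le1\}$; this is ruled out by the non-negativity-at-infinity estimate of step (i) together with the strict convexity of the relativistic term, which forces $\|\dot{\mathbf q}\|_\infty<1$ strictly.
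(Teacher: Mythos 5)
Your overall scheme is the same as the paper's: obtain $\mathbf q$ as a global minimizer, with compactness of minimizing sequences coming from the vanishing of the $A$-term at infinity plus the strict negativity of the infimum, negativity coming from a small loop around a point where $A(\cdot,x_0)$ is genuinely time-dependent (your $q_\varepsilon=x_0+\varepsilon\xi$ is, up to the explicit choice $\xi=-\tilde g$ with $\dot g=\tilde A(\cdot,b)$, exactly the paper's Lemma~\ref{lemma:negative}), and the bound $\|\dot{\mathbf q}\|_\infty<\rho$ from the a priori estimate in the Appendix.

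There is, however, one concrete error in your step (i). You claim that $\int_0^T\dq\cdot A(t,\bar q)\,dt=0$ ``because $\tilde q$ has zero mean and $t\mapsto A(t,\bar q)$ is $T$-periodic.'' This is false: since $A(\cdot,\bar q)$ depends on $t$, the zero mean of $\dq$ gives nothing (take $\dq(t)$ and $A(t,\bar q)$ both proportional to $\cos(2\pi t/T)e_1$). What is true, and what the paper proves in Lemma~\ref{lemma:vanishing}, is that after integrating by parts this term equals $-\int_0^T\tilde q\cdot\partial_t A(t,\bar q)\,dt$, which is bounded by $T^2\sup\{|\partial_t A(t,x)|:|x|\geq|\bar q|-T\}$ and hence tends to $0$ as $|\bar q|\to\infty$ \emph{only because of the decay of $\partial_t A$ in \eqref{cond:decayA}}; periodicity alone does not suffice, and your version would wrongly suggest that the decay of $\partial_t A$ plays no role in the compactness argument. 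Two smaller points: in (ii), the \emph{first} Fourier mode of $\partial_t A(\cdot,x_0)$ may well vanish even though $\partial_t A(\cdot,x_0)\not\equiv0$; you need some nonzero mode, or simply the paper's explicit choice $\xi$ proportional to the $T$-periodic primitive of $\tilde A(\cdot,x_0)$, which yields the clean leading term $-\varepsilon\|\tilde A(\cdot,x_0)\|_2^2$. And the final claim that ``strict convexity of the relativistic term forces $\|\dot{\mathbf q}\|_\infty<1$'' is not an argument; the passage from minimizer to solution goes through the Szulkin critical-point inequality and the regularity result of \cite[Theorem 6]{ABT}, after which the Appendix gives the uniform $\rho<1$. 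With these repairs your proof coincides with the paper's.
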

To the best of our knowledge, this is the first result on the existence of closed trajectories of a charged particle in the relativistic regime of the Lorentz force equation \eqref{eq:LFE}, where the dynamics is completely independent of the scalar potential, since $\Phi$ is identically zero. We also emphasize that the solutions are non-constant, as noted previously in the Introduction. 

To simplify notation, we define the family $\boA$ of vector potentials satisfying the hypotheses of Theorem~\ref{MainTheorem1}, i.e.
\begin{equation}
    \boA:=\{A\in\boC^1(\R^4;\R^3): A \hbox{ satisfies \eqref{def:periodic} and \eqref{cond:decayA}}\}.
\end{equation}

Since our analysis also covers the general case $\Phi\not\equiv 0$, including non-smooth regimes, in what follows we fix a compact set $\boS\subset\R^3$ representing the \textsl{set of singularities} of $\Phi$, which may be empty. Moreover, we define its complement by $\boS^c:=\R^3 \setminus \boS$. 
\begin{definition}
	We define the family of scalar potentials $\boP$ as the set of functions $$\Phi:\R\times\boS^c\to\R$$that are bounded from above, $T$-periodic and continuous in the time variable, and such that 
	\begin{equation}
		\Phi(t,\cdot)\in\boC^1(\boS^c;\R),\quad\hbox{for all }t\in[0,T],
	\end{equation} and
	\begin{equation}\label{cond:decayphi}
		\lim_{|x|\to\infty}|\nabla\Phi(t,x)|=0,\ \text{uniformly in }t.
	\end{equation}
	Moreover, when $\boS\neq\emptyset$, for any point on the boundary $x_0\in\partial\boS$, and any $t_0\in[0,T]$, there exist $\varepsilon, r>0$ such that
	\begin{equation}\label{hyp:S}
		-\Phi(t,x)\geq \frac{r}{|x-x_0|},\quad\text{for all }(t,x)\in\R\times\boS^c\hbox{ with } |x-x_0|,|t-t_0|<\varepsilon.
	\end{equation}
\end{definition}
Observe that the case $\boS=\emptyset$ includes the setting of Theorem~\ref{MainTheorem1}. In this situation, the electromagnetic field in \eqref{eq:LFE0} is assumed to be continuous in $\R^4$, periodic in time, and with uniformly in time decay at infinity. In the general case, the set $\boS$ may consist of a finite union of points or, more generally, compact subsets of $\R^3$, where the singularities of the field are governed by Coulomb's law. Furthermore, the decay conditions \eqref{cond:decayA} and \eqref{cond:decayphi} are necessary to ensure uniqueness of solution to Maxwell's equations in the distributional sense \cite{GM,GT2}, thereby omitting the appearance of electromagnetic radiation phenomena \cite{Jac,Gri}. Hence, the electromagnetic fields described through $\boA\times\boP$ have a natural sense from both mathematical and physical points of view. 
 
Nevertheless, a direct computation shows that any two pairs of potentials related by the transformation
\begin{equation}\label{def:LorenzGauge}
    (\Phi_1,A_1)=(\Phi_2+\partial_t f,A_2-\nabla f),\quad\hbox{for some }f\in\mathcal{C}^1(\R^4;\R),
\end{equation}
generate the same field through \eqref{Potential-Fields}, thereby describing the same dynamics in Equation \eqref{eq:LFE}. This is the well-known \textsl{Lorenz gauge condition} of the potentials, adapted to our framework, which naturally induces the identification of a pair $(\Phi,A)$ with its equivalence class in the quotient space induced by \eqref{def:LorenzGauge}.
\begin{definition}
    We define the set $\left(\boA\times\boP\right)/\sim$ of the equivalence classes of the pairs $(A,\Phi)\in\boA\times\boP$ through the relation $\sim$, that is
\begin{equation}\label{eq:equivalenceclass1}
	(\Phi_1,A_1)\sim (\Phi_2,A_2)\Leftrightarrow(\Phi_1-\Phi_2,A_1-A_2)=(\partial_t f,-\nabla f),\quad\hbox{for some }f\in\mathcal{C}^1(\R^4;\R).
\end{equation}
\end{definition}
We clarify that, whenever we take an element in $\boA\times\boP/\sim$, we implicitly choose a representative within its equivalence class. Thus, we will abuse of the notation and simply write $(A,\Phi)\in(\boA\times\boP)/\sim$. 

Since our analysis focuses on the purely relativistic dynamics of \eqref{eq:LFE}, it is necessary to exclude magnetostatic configurations. More concretely, by Maxwell's equations, the absence of electric field yields the magnetic field to be time-independent necessarily. In particular, it is straightforward to prove that the modulus $|\dq(t)|$ is conserved in Equation \eqref{eq:LFE} when $E\equiv0$ and $B(t,q)=B(q)$. As a consequence, the Lorentz force equation \eqref{eq:LFE} reduces to a second-order Newton–Lorentz equation, thereby losing its relativistic character.

Given these clarifications, we establish the family of potential $(A,\Phi)$ under consideration.

\begin{definition}
The set $\AP$ of admissible potentials is defined as follows 
\begin{equation}\label{cond:admissible}
\AP:=\{(A,\Phi)\in\left(\boA\times\boP\right)/\sim,\quad\hbox{such that }\quad \partial_t A + \nabla \Phi\not\equiv 0\}.
	\end{equation}
\end{definition}
Recall from \eqref{Potential-Fields} that the elements in $\AP$ generate a non-zero electric field. Moreover, since every element in $(\boA\times\boP)/\sim$ gives rise to the same electric field, the set $\AP$ is well-defined. In addition, observe that the condition $\partial_t A + \nabla\Phi\not\equiv 0$ is automatically satisfied whenever $\boS\not=\emptyset$, since $A$ is smooth on $\boS$.

Finally, we stress that the elements of $\boP$ can be considered to have non-positive mean value. Specifically, for each $\Phi\in\boP$, let us define the function $\varphi:\boS^c\to\R$ as follows:
	\begin{equation}\label{def:varphi}
		\varphi(x)=\int_0^T\Phi(t,x)dt.
	\end{equation}
    Since $\Phi$ is bounded from above, it follows that the function $\varphi$ has finite supremum. Then, the potential $\hat{\Phi}$ defined by
	\begin{equation}
		\hat{\Phi}(t,x):=\Phi(t,x)-\frac1T \sup\varphi,\ \hbox{for all }(t,x)\in\R\times\boS^c, 
	\end{equation}
    belongs to $\boP$, satisfies that
    \[\int_0^T\hat\Phi(t,x)dt\leq 0,\]
    and $(A,\hat{\Phi})\sim(A,\Phi)$, for all $A\in\boA$. Therefore, from now on we shall assume without loss of generality that
	\begin{equation}\label{eq:supremum-varphi}
		\sup\limits_{x\in\boS^c}\varphi(x)=0.
	\end{equation}
    
    We emphasize on the dichotomy regarding whether the mean value function $\varphi(x)$, defined in \eqref{def:varphi}, attains its supremum \eqref{eq:supremum-varphi} or not. In fact, when $\varphi$ does not attain a maximum, it must necessarily decay to zero at infinity, i.e.
\begin{equation}\label{cond:decay-varphi}
	\varphi(x)\to0,\hbox{ as }|x|\to\infty.
\end{equation}We then state our second result, which generalizes Theorem~\ref{MainTheorem1}.

\begin{theorem}\label{MainTheorem2}
Let $(A,\Phi)\in\AP$ be such that, for some sequence $\{b_n\}\subset\R^3$ with $|b_n|\to\infty$ as $n\to\infty$, the function
\begin{equation}
	\tilde{A}(t,x):=A(t,x)-\dfrac{1}{T}\int_{0}^{T}A(t,x)dt,
\end{equation}
satisfies the following conditions:
\begin{equation}\label{cond:varphi0}
    \partial_t A(\cdot,b_n)\not\equiv 0,\quad\hbox{for all }n\in\mathbb{N},
\end{equation}
\begin{equation}\label{cond:varphi1}
	    \lim\limits_{n\to\infty}\varphi(b_n)\|\tilde A(\cdot,b_n)\|^{-2}_2=\lim\limits_{n\to\infty}\varphi(b_n)\left(\int_0^T\left|\tilde A(t,b_n)\right|^2dt\right)^{-1}=0,
\end{equation}
and
\begin{equation}\label{cond:varphi2}
	\lim_{n\to\infty}\max\{|\nabla\Phi(t,y)|:\,t\in[0,T],\,|y|\geq |b_n|-T\}\|\tilde A(\cdot,b_n)\|^{-1}_2=0.
\end{equation}
Then, the Lorentz force equation \eqref{eq:LFE} possesses at least one non-constant periodic solution ${\bf q}$. Moreover, when $\boS=\emptyset$,
\begin{equation}
	\|\dot{{\bf q}}\|_\infty<\rho,
\end{equation}
for some $\rho\in(0,1)$ independent of ${\bf q}$.
\end{theorem}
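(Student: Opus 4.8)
The plan is to deduce Theorem~\ref{MainTheorem2} from the conditional minimization result of Section~\ref{Section:Functional}. Because the memberships $A\in\boA$ and $(A,\Phi)\in\AP$ already encode the decay conditions \eqref{cond:decayA}--\eqref{cond:decayphi} and, when $\boS\neq\emptyset$, the Coulomb lower bound \eqref{hyp:S}, that result reduces everything to a single point: that the action functional $\boI(q):=\int_0^T\boL(t,q(t),\dot q(t))\,dt$ takes a negative value, i.e. $\inf\boI<0$. Indeed, along any sequence with $|q_n(0)|\to\infty$ one has $\liminf_n\boI(q_n)\ge 0$ --- integrate by parts in the term $\int_0^T\dot q_n\cdot A(t,q_n)\,dt$, use $\|\dot q_n\|_\infty\le 1$, \eqref{def:periodic} and \eqref{cond:decayA}, and observe that $\liminf_n(-\varphi(q_n(0)))\ge 0$ by the dichotomy preceding \eqref{cond:decay-varphi} --- while \eqref{hyp:S} rules out concentration near $\boS$ exactly as in \cite{AS,Ber1}; hence once $\inf\boI<0$ the minimizing sequences remain in a fixed compact subset of $\R\times\boS^c$ and a minimizer exists. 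So the whole weight of the proof falls on producing negative values of $\boI$.

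For this I would use translated, slow, small test loops tailored to $\{b_n\}$. For each $n$, set
\[
v_n(t)=-\int_0^t\tilde A(s,b_n)\,ds,\qquad q_n(t)=b_n+\varepsilon_n v_n(t),
\]
with $\varepsilon_n>0$ to be chosen below. Since $\int_0^T\tilde A(\cdot,b_n)\,dt=0$, the loop $v_n$ is $T$-periodic with $\dot v_n=-\tilde A(\cdot,b_n)$, and by \eqref{cond:varphi0} it is non-constant, so $\|\tilde A(\cdot,b_n)\|_2>0$. The virtue of this choice is that, using $\int_0^T\tilde A(\cdot,b_n)\,dt=0$ once more,
\[
\int_0^T\dot v_n\cdot A(t,b_n)\,dt=-\|\tilde A(\cdot,b_n)\|_2^2,
\]
the most negative value the first-order magnetic term can attain for a loop of that $L^2$-size. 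Moreover $\|\tilde A(\cdot,b_n)\|_\infty\le C\int_0^T|\partial_t A(t,b_n)|\,dt\to 0$ and $\|\tilde A(\cdot,b_n)\|_2\to 0$ by \eqref{cond:decayA}, so $\|\dot q_n\|_\infty=\varepsilon_n\|\tilde A(\cdot,b_n)\|_\infty\to 0$ provided $\varepsilon_n$ stays bounded; hence $q_n$ is eventually an admissible, non-constant test loop lying off the compact set $\boS$.

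Expanding $A$ and $\Phi$ to first order about $b_n$, bounding the relativistic part through $1-\sqrt{1-s}\le s$ on $[0,1]$, using $\delta_n:=\sup\{|\nabla A(t,y)|:t\in[0,T],\,|y-b_n|\le T\}\to 0$ from \eqref{cond:decayA}, and abbreviating $M_n:=\max\{|\nabla\Phi(t,y)|:t\in[0,T],\,|y|\ge|b_n|-T\}$, a direct computation yields, for $n$ large,
\[
\boI(q_n)\le \varepsilon_n\|\tilde A(\cdot,b_n)\|_2^2\Big[-1+\varepsilon_n(1+T\delta_n)+\frac{T^{3/2}M_n}{\|\tilde A(\cdot,b_n)\|_2}+\frac{-\varphi(b_n)}{\varepsilon_n\|\tilde A(\cdot,b_n)\|_2^2}\Big].
\]
Now $M_n\|\tilde A(\cdot,b_n)\|_2^{-1}\to 0$ by \eqref{cond:varphi2}, and $\eta_n:=-\varphi(b_n)\|\tilde A(\cdot,b_n)\|_2^{-2}\to 0$ with $\eta_n\ge 0$ by \eqref{cond:varphi1} and \eqref{eq:supremum-varphi}; taking $\varepsilon_n:=\sqrt{\eta_n}+1/n$ makes $\varepsilon_n\to 0$ and $\eta_n/\varepsilon_n\to 0$ simultaneously, so (as $\delta_n\to 0$ too) the bracket tends to $-1$ and $\boI(q_n)<0$ for $n$ large, whence $\inf\boI<0$. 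I expect the calibration of $\varepsilon_n$ to be the crux of the argument: the genuinely negative term $-\varepsilon_n\|\tilde A(\cdot,b_n)\|_2^2$ must dominate at once the $O(\varepsilon_n^2)$ errors coming from the relativistic nonlinearity and from the spatial variation of $A$ (which forces $\varepsilon_n$ small) and the \emph{wrong-sign} contribution $-\varphi(b_n)\ge 0$ from the scalar potential (which forces $\varepsilon_n\gg\eta_n$), and hypotheses \eqref{cond:varphi0}--\eqref{cond:varphi2} are precisely what makes this balance possible.

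With $\inf\boI<0$ established, the minimization result of Section~\ref{Section:Functional} produces a minimizer ${\bf q}$ of $\boI$ that solves \eqref{eq:LFE}; it is non-constant because any constant $c\in\boS^c$ satisfies $\boI(c)=-\varphi(c)\ge 0>\inf\boI$ by \eqref{eq:supremum-varphi}. Finally, when $\boS=\emptyset$ the electromagnetic field is non-singular, so the uniform bound $\|\dot{\bf q}\|_\infty<\rho<1$ follows from the a priori estimate of the Appendix.
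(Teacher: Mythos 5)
Your proposal is correct and follows essentially the same route as the paper: reduce the theorem to producing negative values of the action via the conditional minimization result (Theorem~\ref{thm:PLA}), and then test with the translated loops $q_n=b_n-\varepsilon\int_0^t\tilde A(s,b_n)\,ds$, using \eqref{cond:varphi0}--\eqref{cond:varphi2} exactly as in Proposition~\ref{MainTheorem2'}. The only (immaterial) difference is your $n$-dependent calibration $\varepsilon_n=\sqrt{\eta_n}+1/n$; the paper instead fixes $\varepsilon$ small independently of $n$ (admissibility being guaranteed by the uniform bound $\|\tilde A(\cdot,b_n)\|_\infty\leq TM$) and lets $n\to\infty$ absorb the $\Phi$-terms, so no calibration is actually needed.
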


Intuitively, conditions \eqref{cond:varphi1} and \eqref{cond:varphi2} require, at least in one direction, the decay of both the mean value and the gradient of $\Phi(t,x)$ to be faster than the  decay of the oscillations of $A(t,x)$. These hypotheses represent the first instance in the literature, to our knowledge, where the scalar potential $\Phi$ is effectively controlled by the vector field $A$, revealing the novelty of our approach.

On the other hand, note that if the decay condition \eqref{cond:decay-varphi} does not hold, then the function $\varphi$ attains its maximum. In this setting, let $\Phi\in\boP$ and $b\in\boS^c$ be such that
\begin{equation}\label{def:varphi-maximum}
	\varphi(b)=\int_0^T\Phi(t,b)dt=0=\max\limits_{x\in\R^3}\varphi(x),
\end{equation}
and let $M_\Phi\subset\boS^c$ be the set of points satisfying \eqref{def:varphi-maximum}, i.e.
\begin{equation}
M_\Phi:=\{b\in\boS^c:\ \varphi(b)=\max\limits_{\boS^c} \varphi\}.
\end{equation}

The following result addresses a complementary regime to that considered in Theorem~\ref{MainTheorem2}. While the scope of these results overlap in some cases, they also clearly cover distinct situations.

\begin{theorem}\label{MainTheorem3}
	Let $(A,\Phi)\in\AP$ be such that there exists a point $b\in M_\Phi$ satisfying that
    \begin{equation}\label{cond:nonequilibrium-ElecField}
			\partial_t A(t,b)\not\equiv-\nabla\Phi(t,b).
		\end{equation}	
    Then, the Lorentz force equation \eqref{eq:LFE} possesses at least one non-constant periodic solution ${\bf q}$. Moreover, when $\boS=\emptyset$,
\begin{equation}
	\|\dot{{\bf q}}\|_\infty<\rho,
\end{equation}
for some $\rho\in(0,1)$ independent of ${\bf q}$.
\end{theorem}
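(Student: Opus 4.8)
The plan is to follow the variational scheme underlying the whole paper. Non-constant $T$-periodic solutions of \eqref{eq:LFE} are sought as global minimizers of the action functional $\mathcal I(q)=\int_0^T\boL(t,q(t),\dq(t))\,dt$ associated with the Lagrangian \eqref{Lagrangian}, over the set of admissible $T$-periodic loops taking values in $\boS^c$ with $\|\dq\|_\infty<1$. By the conditional minimization result of Section~\ref{Section:Functional} (and, when $\boS=\emptyset$, the a priori velocity bound of the Appendix), it suffices to prove the single inequality $\inf\mathcal I<0$: a negative infimum is automatically attained at a \emph{non-constant} loop, since on a constant $q\equiv c\in\boS^c$ one has $\mathcal I(c)=-\varphi(c)\geq-\sup\varphi=0$ by \eqref{eq:supremum-varphi}, and the velocity estimate is part of the conclusion of that result. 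So the whole proof reduces to producing one loop with negative action.

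To this end I would fix a point $b\in M_\Phi$ satisfying \eqref{cond:nonequilibrium-ElecField} and test with the small loops
\[
q_\varepsilon(t):=b+\varepsilon\,\eta(t),\qquad\varepsilon>0,
\]
where $\eta\in\boC^\infty(\R;\R^3)$ is $T$-periodic and will be chosen at the end. Since $\boS$ is compact, $\boS^c$ is open, so $b$ has a compact neighbourhood $K\subset\boS^c$; for all sufficiently small $\varepsilon$ one has $q_\varepsilon(\R)\subset K$ and $\|\dot q_\varepsilon\|_\infty=\varepsilon\|\dot\eta\|_\infty<1$, so $q_\varepsilon$ is admissible and $\mathcal I(q_\varepsilon)$ is finite.

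The next step is the first-order expansion of $\mathcal I(q_\varepsilon)$ as $\varepsilon\to0^+$. The relativistic term gives $\tfrac{\varepsilon^2}{2}\int_0^T|\dot\eta|^2\,dt+O(\varepsilon^4)$. For the vector-potential term, using $A\in\boC^1$ and then integrating by parts — the boundary contributions vanishing by $T$-periodicity of $t\mapsto A(t,b)$ (from \eqref{def:periodic}) and of $\eta$ — one gets
\[
\varepsilon\int_0^T\dot\eta(t)\cdot A\big(t,q_\varepsilon(t)\big)\,dt=\varepsilon\int_0^T\dot\eta(t)\cdot A(t,b)\,dt+O(\varepsilon^2)=-\varepsilon\int_0^T\eta(t)\cdot\partial_tA(t,b)\,dt+O(\varepsilon^2).
\]
For the scalar term, the $\boC^1$-smoothness of $\Phi(t,\cdot)$ and the crucial fact that $b\in M_\Phi$ gives $\varphi(b)=0$ yield
\[
-\int_0^T\Phi\big(t,q_\varepsilon(t)\big)\,dt=-\varphi(b)-\varepsilon\int_0^T\eta(t)\cdot\nabla\Phi(t,b)\,dt+o(\varepsilon)=-\varepsilon\int_0^T\eta(t)\cdot\nabla\Phi(t,b)\,dt+o(\varepsilon).
\]
Summing and recalling $E(t,b)=-\partial_tA(t,b)-\nabla\Phi(t,b)$ from \eqref{Potential-Fields},
\[
\mathcal I(q_\varepsilon)=\varepsilon\int_0^T\eta(t)\cdot E(t,b)\,dt+o(\varepsilon).
\]

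Finally I would choose $\eta$: by \eqref{cond:nonequilibrium-ElecField} the $T$-periodic map $t\mapsto E(t,b)$ is not identically zero, hence there is a smooth $T$-periodic $\eta$ with $\int_0^T\eta\cdot E(t,b)\,dt\neq0$ (e.g.\ a mollification of $-E(\cdot,b)$), and replacing $\eta$ by $-\eta$ if needed we may assume this integral is negative. With this $\eta$ the linear term dominates, $\mathcal I(q_\varepsilon)<0$ for all sufficiently small $\varepsilon>0$, so $\inf\mathcal I<0$ and the theorem follows. I do not expect a serious obstacle in this complementary regime; the one point that genuinely needs care is the choice of base point. Centering the loop at a maximizer $b\in M_\Phi$ is exactly what kills the zeroth-order term $-\varphi(b)$ (strictly positive at other points, and otherwise dominant), and at such $b$ the surviving $\varepsilon^1$ coefficient reduces — after the integration by parts — to the test-loop average of the electric field $E(\cdot,b)$, whose non-triviality is precisely hypothesis \eqref{cond:nonequilibrium-ElecField}. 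The only regularity nuisance is that mere $\boC^1$ control on $\Phi$ produces an $o(\varepsilon)$, rather than $O(\varepsilon^2)$, remainder in the scalar term, which is still amply enough to fix the sign.
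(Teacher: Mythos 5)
Your proposal is correct, and for the crucial step it takes a genuinely different (and more elementary) route than the paper. The reduction is identical: by Theorem~\ref{thm:PLA} together with Lemma~\ref{lemma:uniformbound}, everything hinges on showing $\inf_{\Lambda}\boI<0$. For that step the paper (Proposition~\ref{MainTheorem3'}) argues abstractly: it computes $\boI'(b)[\psi]=-\int_0^T\psi\cdot(\partial_t A(t,b)+\nabla\Phi(t,b))\,dt$, notes $\boI'(b)\neq 0$ by \eqref{cond:nonequilibrium-ElecField}, and then runs a negative pseudogradient flow (Willem's lemma plus an ODE in the Banach space $\boW$) starting at the constant $b$ to descend strictly below $\boI(b)=-\varphi(b)=0$. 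You instead test directly with the small loops $q_\varepsilon=b+\varepsilon\eta$ and expand to first order: the relativistic term is $O(\varepsilon^2)$, the $A$-term after integration by parts contributes $-\varepsilon\int_0^T\eta\cdot\partial_tA(t,b)\,dt+O(\varepsilon^2)$, the $\Phi$-term contributes $-\varepsilon\int_0^T\eta\cdot\nabla\Phi(t,b)\,dt+o(\varepsilon)$ precisely because $\varphi(b)=0$ at a maximizer, so $\boI(q_\varepsilon)=\varepsilon\int_0^T\eta\cdot E(t,b)\,dt+o(\varepsilon)$, and \eqref{cond:nonequilibrium-ElecField} lets you pick $\eta$ making the linear coefficient negative. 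This is the same derivative $\boI'(b)[\eta]$ made explicit by hand; it parallels the constructions of Lemma~\ref{lemma:negative} and Proposition~\ref{MainTheorem2'} and avoids the pseudogradient/flow machinery entirely, at the price of writing out the expansion (your $o(\varepsilon)$ remainder for the $\Phi$-term uses the same implicit regularity of $\boF$ — its $\boC^1$ character near constants — that the paper itself invokes when it differentiates $\boI$ at $b$). Conversely, the paper's flow argument needs only the abstract fact $\boI'(b)\neq0$ and is the standard deformation device; the paper even remarks that Lemma~\ref{lemma:negative} could have been proved that way, so the two mechanisms are interchangeable here, and your version is arguably the lighter one.
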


We conclude the section with some remarks analyzing notable cases where Theorems~\ref{MainTheorem2} and~\ref{MainTheorem3} are applicable.

\begin{remark}\label{Rem-Compact support}

Suppose that $\tilde A$ has compact support in $x$, independent of $t$, say $K \subset \mathbb{R}^3$. Also assume that there exists another compact set $K_0 \subset\subset K$ such that
\begin{equation}\label{eq:compactsupports}
\varphi(x) = 0, \quad\hbox{and}\quad \max\{|\nabla\Phi(t, y)| : t \in [0, T],\, |y| \geq |x| - T\} = 0,\quad\hbox{for all }x\in\R^3\setminus K_0.
\end{equation}
This situation can be interpreted as a limiting case of Theorem~\ref{MainTheorem2}, where $A$ still governs the behavior of $\Phi$, since, roughly speaking, $\Phi$ must vanish at infinity before $\tilde A$ does. However, Theorem~\ref{MainTheorem2} is not directly applicable, because the zero sets of $\varphi$ and $\tilde A$ intersect at infinity, so conditions~\eqref{cond:varphi1} and~\eqref{cond:varphi2} are ill-defined. Nevertheless, from the condition on $\nabla\Phi$ in~\eqref{eq:compactsupports}, it follows that $\nabla\Phi(t, x) = 0$ for all $x \in \mathbb{R}^3 \setminus K_0$. On the other hand, by the definition of support, we have $\partial_t A(\cdot, x) \not\equiv 0$ for all $x \in K$. This implies that
\[
\nabla\Phi(\cdot, x) \equiv 0 \not\equiv \partial_t A(\cdot, x), \quad \text{for all } x \in K \setminus K_0.
\]
At the same time, $\varphi(x) = 0$ for all $x \in K \setminus K_0$. Therefore, Theorem~\ref{MainTheorem3} applies in this case.
\end{remark}

\begin{remark}

A neat particular case that combines both Theorem~\ref{MainTheorem2} and Theorem~\ref{MainTheorem3} is the case of $\Phi$ being autonomous. Indeed, let $(A,\Phi)\in\AP$ be such that $\Phi$ does not depend on $t$, so that $\varphi(x)=T\Phi(x)\leq 0$. Thus, if there exists $b\in\boS^c$ such that 
\[\Phi(b)=0,\quad \partial_t A(\cdot,b)\not\equiv 0,\]
then $\nabla\Phi(b)=0\not\equiv\partial_t A(\cdot,x)$, and Theorem~\ref{MainTheorem3} yields the existence of a non-constant periodic solution to \eqref{eq:LFE}. If, on the contrary, $\Phi(x)<0$ for every $x\in\boS^c$, assuming in addition that $\partial_t A(\cdot,x)\not\equiv 0$ for every $|x|\gg 1$, and
        \[\lim_{|x|\to\infty}\frac{\Phi(x)}{\|\tilde A(\cdot,x)\|_2^2}=0,\quad\lim_{|x|\to\infty}\frac{\max_{\{|y|\geq|x|-T\}}|\nabla\Phi(y)|}{\|\tilde A(\cdot,x)\|_2}=0,\]
we deduce from Theorem~\ref{MainTheorem2} that, again, a non-constant periodic solution to \eqref{eq:LFE} exists.
\end{remark}

\section{Minimization of the action functional}\label{Section:Functional}

The purpose of this section is to prove, under natural conditions, the existence of a non-constant minimizer of the action functional associated to the relativistic Lagrangian \eqref{Lagrangian}. In the language of Classical Mechanics, this amounts to proving a principle of least action. The results discussed in the first two subsections are essentially established in \cite{ABT,AS}, we include them here  to keep the exposition self-contained. In contrast, the third subsection constitutes the novelty of our approach, see Remark~\ref{remark:PLA}. 

\subsection{Functional framework}

Let $\boW$ be the following Banach space 
\begin{align*}
    \boW=\{q:\R\to\R^3,\text{ Lipschitz and }T\text{-periodic}\}
\end{align*}
endowed with the norm
\begin{equation}\label{norm}
\|q\|_\boW:=\|q\|_{1,\infty}=\|q\|_\infty+\|\dq\|_{\infty},\ \hbox{for all }q\in\boW.
\end{equation}
Consider also the subset 
\begin{equation*}
    \boK=\{q\in\boW:\, \|\dq\|_\infty\leq 1\},
\end{equation*}
which is convex and closed in $\boW$ \cite[Lemma 3.1]{ABT}.

Throughout the paper, we will frequently make use of the standard decomposition $\boW = \tilde{\boW} \oplus \R^3$, where
\begin{equation}\label{def:tildeW}
    \tilde\boW=\left\{q\in\boW:\, \bar q=0\right\},\quad\hbox{with}\quad    \bar q=\frac{1}{T}\int_0^Tq(t)dt,
\end{equation}
so that every $q\in\mathcal{W}$ can be uniquely written as 
\begin{equation}\label{eq:decomposition}
    q= \tilde q + \bar q.   
\end{equation}
This framework allows to use both the classical Poincar\'e-Wirtinger inequality:
\begin{equation}\label{eq:poincare}
	\frac{\pi^2}{T^2}\|\tilde q\|^2_2\leq\|\dq\|^2_2,\quad\text{for every }q\in\boW,
\end{equation}
and the estimate
\begin{equation}\label{eq:qtildebounded}
    \|\tilde q\|_\infty\leq T,\quad\text{for all }q\in\boK,
\end{equation}
which follows directly from 
the (integral version of) the mean value theorem.

For any $\Phi\in\boP$, let us define the \textsl{non-singular set}
\begin{equation}\label{def:Lambda}
	\Lambda=\left\{q\in\boW:\ \varphi(q)>-\infty\right\},\quad\hbox{and}\quad\boK_\Lambda=\boK\cap\Lambda,
\end{equation} 
where we have considered the obvious extension of the non-positive function $\varphi$, defined in \eqref{def:varphi}, to the whole space $\boW$. In particular, $\Lambda=\boW$ when $\boS=\emptyset$. Moreover, if $\boS\not=\emptyset$, we note that every $q\in\boW$ for which there exists $t_0\in \R$ such that $q(t_0)\in\boS$ satisfies, by condition \eqref{hyp:S}, that
\[-\Phi(t,q(t))\geq \frac{c}{|q(t)-q(t_0)|},\quad\text{for all }t\in [t_0-\delta,t_0+\delta],\]
for some $\delta>0$ small enough. This inequality, together with the fact that $q$ is Lipschitz, lead to the conclusion that the non-singular set \eqref{def:Lambda} can equivalently be written as
\[\Lambda=\left\{q\in\boW:\ q(t)\not\in\boS,\text{ for all }t\in\R\right\},\]
as defined in \cite{AS} for $\boS=\{0\}$. This shows that $\Lambda$ does not depend on $\Phi$. In any case, it is simple to check that $\Lambda$ is an open subset of $\boW$.

In this setting, the action functional $\boI:\Lambda\to(-\infty,\infty]$ associated to \eqref{eq:LFE} is defined by
\begin{equation}\label{eq:functional}
    \boI(q)=\left\{\begin{array}{ll}
    	\displaystyle\int_0^T \left(1-\sqrt{1-|\dq|^2}\right)dt + \displaystyle\int_0^T \left(\dq\cdot A(t,q)-\Phi(t,q)\right)dt,  &\text{ if }q\in\boK_\Lambda,
        
        \vspace{0.3 cm}
        \\
        
    	\infty,\quad &\text{ if }q\in\Lambda\setminus\boK_\Lambda.
    \end{array}\right.
\end{equation}
It is standard to decompose the functional as $\boI = \Psi + \mathcal{F}$, where
 \begin{equation}\label{eq:Psi}
    \Psi(q)=\int_0^T \left(1-\sqrt{1-|\dq|^2}\right)dt,\ \text{if }q\in\boK, \quad \Psi(q)=\infty,\ \text{if } q\in\boW\setminus\boK,
\end{equation}
and
\begin{equation}\label{eq:F}
    \boF(q)=\int_0^T \left(\dq\cdot A(t,q)-\Phi(t,q)\right)dt,\quad \hbox{for all  }q\in\Lambda.
\end{equation}
It is straightforward to verify that the functional $\boF$ is of class $\mathcal{C}^1$. Moreover, it is well-known that $\Psi:\boW \to (-\infty,\infty]$ is a convex, proper functional with closed domain $\boK \subset \boW$. In particular, $\Psi$ is continuous on $\boK$ and lower semi-continuous on the whole space $\boW$.

\begin{remark}
    As commented in the Introduction, the structure of $\boI$ is a particular instance of Szulkin's functionals \cite{Szu}. The situation $\boS=\emptyset$ was rigorously established in \cite{ABT}, corresponding to the setting of continuous electromagnetic fields. Furthermore, the case $\boS=\{0\}$ is analyzed in \cite{AS,Ber1}, which describes an electric field with an isolated Coulomb-type singularity and a continuous magnetic field. This case was later generalized in \cite{BosDamPap} for singularities depending on the time variable, aiming to model the dynamics induced by moving particles through the Liénard-Wiechert potentials. In our work, we choose not to extend the analysis to traveling point singularities, for the sake of simplicity. However, we believe that our results should remain valid in that framework, provided the moving singularities are controlled by a time-dependent version of Coulomb’s law analogous to \eqref{hyp:S}.
\end{remark}
In this non-smooth setting, critical points are defined as follows.
\begin{definition}
	A point $q\in \boK_\Lambda$ is said to be critical for $\boI$ if it satisfies the following inequality:
	\begin{equation}\label{CriticalPoint}
		\Psi(p)-\Psi(q)+\mathcal{F}'(q)[p-q]\geq 0,\quad \hbox{for all }p\in \boK_\Lambda.
	\end{equation}
\end{definition}\noindent
An argument analogous to that of \cite[Theorem 6]{ABT} shows that a function $q\in\boW$ solves \eqref{eq:LFE} if and only if $q$ is a critical point for the functional \eqref{eq:functional}. Moreover, it follows directly from the convexity of $\Psi$ that every local minimizer of $\boI$ is necessarily a critical point of the functional, see \cite[Proposition 1.1]{Szu}.

Critical points that are not minimizers are typically obtained via min-max variational methods. In the literature of min-max theory, establishing the compactness of Palais--Smale sequences is usually a challenging task. In fact, proving a suitable compactness of Palais--Smale sequences (in the Szulkin's sense) for min-max results applied to the Lorentz force equation was one of the main contributions of \cite{ABT}. In the present paper, however, our approach is based on direct minimization, and thus we avoid dealing with Palais--Smale sequences or min-max methods. Nonetheless, compactness issues still arise in our framework.

To illustrate this, consider the case where $\boS = \emptyset$, and take the functional \eqref{eq:functional} with $A \equiv 0$, that is,
\[
\boI(q) = \Psi(q) - \varphi(q).
\]
Since $\varphi \leq 0$, it follows that $\boI(q) \geq 0$ for all $q \in \boW$. Now, suppose further that $\lim_{|x| \to \infty} \varphi(x) = 0$. Then, for any sequence $\{x_n\} \subset \mathbb{R}^3$ with $|x_n| \to \infty$, we have $\boI(x_n) \to 0$. Hence, there exist minimizing sequences that do not converge. We will show that introducing a non-autonomous vector potential $A$ provides the desired compactness.

\subsection{Semicontinuity property of the action functional}

The following result establishes an intrinsic compactness property of the set $\boK$. 
\begin{lemma}\label{lemma:compactness}
    Let $\{q_n\}\subset\boK$ be bounded in $L^\infty$. Then, there exists $q\in\boK$ such that, up to a subsequence,
    \begin{align}
        &q_n\to q,\quad\text{strongly in }L^\infty,\label{eq:Linftyconv}
        \\
        &\dot{q}_n\rightharpoonup \dot{q},\quad\text{in the weak-$^*$ topology }\sigma(L^\infty,L^1).\label{eq:weak*conv}
    \end{align}
\end{lemma}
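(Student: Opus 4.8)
The plan is to use the standard Arzelà--Ascoli plus weak-$*$ compactness argument, exploiting that every $q_n\in\boK$ is $1$-Lipschitz. First I would note that, since $\|\dq_n\|_\infty\le 1$ for all $n$, the sequence $\{q_n\}$ is equi-Lipschitz, hence equicontinuous; combined with the hypothesis that $\{q_n\}$ is bounded in $L^\infty$, the Arzelà--Ascoli theorem yields a subsequence (not relabeled) and a continuous $T$-periodic function $q:\R\to\R^3$ such that $q_n\to q$ uniformly on $[0,T]$, i.e. strongly in $L^\infty$, which gives \eqref{eq:Linftyconv}. Passing to the limit in the Lipschitz estimate $|q_n(t)-q_n(s)|\le|t-s|$ shows that $q$ is itself $1$-Lipschitz and $T$-periodic, so $q\in\boW$.

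Next, for the weak-$*$ convergence of the derivatives: the sequence $\{\dq_n\}$ is bounded in $L^\infty(0,T;\R^3)$, which is the dual of the separable space $L^1(0,T;\R^3)$. By the Banach--Alaoglu theorem (sequential form on a separable predual), there is a further subsequence and some $v\in L^\infty(0,T;\R^3)$ with $\dq_n\rightharpoonup v$ in the weak-$*$ topology $\sigma(L^\infty,L^1)$. It remains to identify $v=\dq$ and to check $q\in\boK$. For the identification, I would test against smooth periodic functions: for any $\phi\in\boC^\infty_{per}([0,T];\R^3)$, integration by parts gives $\int_0^T \dq_n\cdot\phi\,dt=-\int_0^T q_n\cdot\dot\phi\,dt$; the left-hand side converges to $\int_0^T v\cdot\phi\,dt$ by weak-$*$ convergence (since $\phi\in L^1$), while the right-hand side converges to $-\int_0^T q\cdot\dot\phi\,dt$ by the uniform convergence $q_n\to q$. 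Hence $\int_0^T v\cdot\phi\,dt=-\int_0^T q\cdot\dot\phi\,dt$ for all such $\phi$, which means $v$ is the distributional (hence, being $L^\infty$, the a.e.) derivative of $q$, so $v=\dq$. Finally, weak-$*$ lower semicontinuity of the $L^\infty$ norm (or simply the fact that a weak-$*$ limit of a ball stays in the ball) gives $\|\dq\|_\infty\le\liminf_n\|\dq_n\|_\infty\le 1$, so $q\in\boK$, completing \eqref{eq:weak*conv}.

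The argument is essentially routine; the only point requiring a little care is the sequential form of Banach--Alaoglu, which is valid here precisely because the predual $L^1(0,T)$ is separable, so the closed bounded ball of $L^\infty$ is metrizable in the weak-$*$ topology and sequential compactness applies. A secondary subtlety is to make sure the two subsequence extractions (Arzelà--Ascoli and Banach--Alaoglu) are compatible; this is handled by performing them successively on nested subsequences, so that the final subsequence enjoys both convergences simultaneously. No genuine obstacle is expected: the result is an intrinsic compactness statement for $1$-Lipschitz functions and follows from classical functional-analytic tools once the Lipschitz bound furnished by $q_n\in\boK$ is in hand.
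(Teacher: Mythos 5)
Your proof is correct and follows essentially the same route as the paper: compactness in $\boC$ for the functions (the paper phrases your Arzelà--Ascoli step as the compact embedding $W^{1,\infty}\hookrightarrow\boC$), Banach--Alaoglu for the derivatives, and weak-$*$ lower semicontinuity of the $L^\infty$ norm to get $q\in\boK$. The only cosmetic difference is how the weak-$*$ limit is identified with $\dot q$: you integrate by parts against periodic test functions, whereas the paper matches both limits through an auxiliary weak $H^1$ limit; both identifications are valid.
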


\begin{proof}
    First, the sequence $\{q_n\}$ is bounded in $W^{1,\infty}$, and hence also bounded in the Sobolev space $H^1$. Consequently, there exists $q\in H^1$ such that, up to a not relabeled subsequence,
    \begin{equation}
        q_n\rightharpoonup q,\quad\text{weakly in }H^1. \label{eq:H1conv}
    \end{equation}
    Moreover, by the compact embedding $W^{1,\infty} \hookrightarrow \boC$, there exists $p \in \boC$ such that, up to a further subsequence,
    \begin{equation}\label{infconv}
        \|q_n-p\|_\infty\to 0.
    \end{equation}
    In particular, this implies pointwise convergence, and hence $p$ is $T$-periodic. Furthermore,  $\|q_n-p\|_2\to 0$, so that $q_n\rightharpoonup p$ weakly in $L^2$. In view of \eqref{eq:H1conv}, we deduce that $p=q$, which establishes \eqref{eq:Linftyconv}.\\
    Now we apply the Banach--Alaoglu--Bourbaki theorem. Since $\|\dot{q}_n\|_\infty\leq 1$ for all $n$, there exists $\hat{p}\in L^\infty$ such that, up to a further subsequence,
    \[\dot{q}_n\rightharpoonup \hat p,\quad\text{ in the weak-$^*$ topology }\sigma(L^\infty,L^1).\]
    This trivially implies that $\dot{q}_n\rightharpoonup \hat p$ weakly in $L^2$. Recalling \eqref{eq:H1conv}, we conclude that $\hat p=\dot{q}$, which proves \eqref{eq:weak*conv}. Finally, since the $L^\infty$-norm is lower semicontinuous with respect to the convergence \eqref{eq:weak*conv}, it follows that $\|\dot{q}\|_\infty\leq 1$, and thus $q\in\boK$.
\end{proof}

Let us recall that the functional $\Psi:\boW\to\R$ defined in \eqref{eq:Psi} is lower semicontinuous with respect to the $L^\infty$ norm \cite[Proposition 1]{BerJebMaw}, and consequently also with respect to the $W^{1,\infty}$ norm. Moreover, the functional $\boF$, defined in \eqref{eq:F}, is of class $\boC^1$ with respect to the $W^{1,\infty}$ norm. The following result provides a kind of lower semicontinuity property of $\boI$ with respect to the convergences \eqref{eq:Linftyconv} and \eqref{eq:weak*conv}.

\begin{lemma}\label{lemma:wlsc}
    Let $\{q_n\}\subset \boK_\Lambda$ be a sequence bounded in $L^\infty$, and let $q\in\boK_\Lambda$. Assume that \eqref{eq:Linftyconv} and \eqref{eq:weak*conv} hold. Then, for any compact set $\boS\subset\R^3$ and any pair $(A,\Phi)\in\AP$, the following holds:
    \begin{equation}\label{eq:Ilsc}
        \boF(q)=\lim_{n\to\infty}\boF(q_n),\quad \boI(q)\leq\liminf_{n\to\infty}\boI(q_n).
    \end{equation}
\end{lemma}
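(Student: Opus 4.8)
The plan is to split \eqref{eq:Ilsc} into its two assertions and treat each separately, the first being a continuity statement for the smooth part $\boF$ and the second a lower-semicontinuity statement for $\boI=\Psi+\boF$ that follows by combining this continuity with the lower semicontinuity of $\Psi$.

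First I would establish $\boF(q_n)\to\boF(q)$. Writing $\boF(q_n)=\int_0^T\dot q_n\cdot A(t,q_n)\,dt-\int_0^T\Phi(t,q_n)\,dt$, I would handle the two integrals separately. For the potential term, since $q_n\to q$ uniformly by \eqref{eq:Linftyconv} and $q\in\boK_\Lambda$ means $q(\R)$ is a compact subset of $\boS^c$, for $n$ large all $q_n(\R)$ lie in a fixed compact neighborhood $N\subset\boS^c$ of $q(\R)$; on $[0,T]\times N$ the function $\Phi$ is continuous hence bounded and uniformly continuous, so $\Phi(t,q_n(t))\to\Phi(t,q(t))$ uniformly in $t$ and the integrals converge. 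For the term $\int_0^T\dot q_n\cdot A(t,q_n)\,dt$, the idea is to split $A(t,q_n)=A(t,q)+\big(A(t,q_n)-A(t,q)\big)$: the part $\int_0^T\dot q_n\cdot A(t,q)\,dt$ converges to $\int_0^T\dot q\cdot A(t,q)\,dt$ because $t\mapsto A(t,q(t))\in L^1$ and $\dot q_n\rightharpoonup\dot q$ in $\sigma(L^\infty,L^1)$ by \eqref{eq:weak*conv}; the remainder is bounded by $\|\dot q_n\|_\infty\,T\,\|A(\cdot,q_n(\cdot))-A(\cdot,q(\cdot))\|_\infty\le T\sup_{[0,T]}|A(t,q_n(t))-A(t,q(t))|\to0$, again by uniform convergence of $q_n$ to $q$ together with uniform continuity of $A$ on the compact set $[0,T]\times\overline N$. (Here I implicitly choose a representative $A$ of the equivalence class; $\boF$ is gauge-dependent but changes by $\int_0^T\frac{d}{dt}f(t,q(t))\,dt-\int_0^T\partial_t f(t,q(t))\,dt$, which is continuous too, so there is no issue.) This gives the first identity in \eqref{eq:Ilsc}.

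For the second assertion, I would use $\boI(q_n)=\Psi(q_n)+\boF(q_n)$ and the lower semicontinuity of $\Psi$ with respect to the convergence at hand. The functional $\Psi$ is convex and lower semicontinuous on $\boW$ for the $L^\infty$ (equivalently $W^{1,\infty}$) topology, but here I only have $\dot q_n\rightharpoonup\dot q$ weak-$*$, not strongly; the standard remedy is that a convex functional that is (strongly) lower semicontinuous is also sequentially weakly lower semicontinuous, and more directly $\Psi(q)=\int_0^T g(\dot q)\,dt$ with $g(v)=1-\sqrt{1-|v|^2}$ convex, so by the classical lower-semicontinuity theorem for convex integrands (or by Mazur's lemma applied to $\dot q_n\rightharpoonup\dot q$ in $L^2$, which follows from \eqref{eq:weak*conv}) one gets $\Psi(q)\le\liminf_n\Psi(q_n)$. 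Combining, $\liminf_n\boI(q_n)=\liminf_n\big(\Psi(q_n)+\boF(q_n)\big)\ge\liminf_n\Psi(q_n)+\lim_n\boF(q_n)\ge\Psi(q)+\boF(q)=\boI(q)$, using that $\boF(q_n)$ converges so it may be pulled out of the $\liminf$.

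The main obstacle I anticipate is the bookkeeping around the singular set $\boS$: one must argue carefully that, for $n$ large, the curves $q_n$ stay inside a fixed compact subset of $\boS^c$ so that the continuity/boundedness of $\Phi$ (and its uniform continuity) can be invoked — this rests on $q\in\boK_\Lambda$, i.e. $q(\R)\cap\boS=\emptyset$ with $q(\R)$ compact, hence $\mathrm{dist}(q(\R),\boS)>0$, and then $\|q_n-q\|_\infty\to0$ does the rest. Everything else (the weak-$*$ pairing against the fixed $L^1$ function $A(\cdot,q(\cdot))$, the uniform-continuity estimates, the convex lower semicontinuity of $\Psi$) is routine once this localization is in place. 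One should also note $\|\dot q_n\|_\infty\le1$ uniformly, which is what makes the remainder estimate in the $A$-term uniform in $n$.
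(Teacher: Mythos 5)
Your proposal is correct and follows the same structure as the paper's proof: continuity of $\boF$ along the convergences \eqref{eq:Linftyconv}--\eqref{eq:weak*conv}, lower semicontinuity of $\Psi$, and then the combination $\liminf_n\boI(q_n)\geq \Psi(q)+\boF(q)$. The only real divergence is in the $\Psi$ step: the paper simply invokes the known fact (cited from \cite{BerJebMaw}) that $\Psi$ is lower semicontinuous with respect to the $L^\infty$ norm, so it only needs the strong convergence $\|q_n-q\|_\infty\to 0$, whereas you derive lower semicontinuity from the convexity of the integrand $v\mapsto 1-\sqrt{1-|v|^2}$ together with $\dot q_n\rightharpoonup\dot q$ (Mazur or the classical weak l.s.c. theorem for convex integrands). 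Both are valid; the paper's route is shorter because it leans on a quoted result tailored to this relativistic functional, while yours is more self-contained but uses the derivative convergence \eqref{eq:weak*conv} for this step as well. Your explicit splitting $A(t,q_n)=A(t,q)+\bigl(A(t,q_n)-A(t,q)\bigr)$ and the localization of the $q_n$ in a compact subset of $\boS^c$ (via $\mathrm{dist}(q(\R),\boS)>0$) are exactly the details the paper leaves implicit under ``regularity of $A$'' and ``$\Phi(t,\cdot)\in\boC^1(\boS^c)$'', so no gap there.
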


\begin{proof}
    First, using regularity of $A$ together with the convergences \eqref{eq:Linftyconv} and \eqref{eq:weak*conv}, we obtain
    \begin{equation}\label{eq:Aconvergence}
        \int_0^T \dot{q}_n\cdot A(t,q_n)dt\to \int_0^T \dot{q}\cdot A(t,q)dt.
    \end{equation}
    Moreover, since $\Phi(t,\cdot)\in\mathcal{C}^1(\boS^c;\R)$, \eqref{eq:Linftyconv} implies
    \[\int_0^T\Phi(t,q_n)dt\to \int_0^T\Phi(t,q)dt,\]
    which establishes
    \begin{equation*}
    	\lim_{n\to\infty}\boF(q_n)=\boF(q).
    \end{equation*}
    Finally, the lower semicontinuity of $\Psi$ with respect to the $L^\infty$-norm yields
    \[\boI(q)\leq\liminf_{n\to\infty}\boI(q_n),\] thus proving \eqref{eq:Ilsc}.
\end{proof}

Motivated by the previous lemma, one may ask whether every sequence in $\boK_\Lambda$ that converges in the sense of \eqref{eq:Linftyconv}-\eqref{eq:weak*conv} has its limit also lying in $\boK_\Lambda$. This is indeed the case, provided the action functional remains bounded along the sequence. This idea originates from \cite[Lemma 3.2]{AS}, even though our result is valid for slightly more general singular sets. The precise statement is the following.

\begin{lemma}\label{lemma:KLambdaclosed}
    Let $\{q_n\}\subset \boK_\Lambda$ be a sequence satisfying \eqref{eq:Linftyconv}-\eqref{eq:weak*conv} for some $q\in\boK$. Let $\boS\subset\R^3$ be a compact set, and let $(A,\Phi)\in\AP$. Assume that $\{\boI(q_n)\}$ is bounded. Then, $q\in\boK_\Lambda$.
\end{lemma}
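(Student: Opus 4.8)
The plan is to argue by contradiction, reducing the claim to showing that the Coulomb-type lower bound \eqref{hyp:S} forces $\boI(q_n)\to+\infty$ along the sequence. Since $\Lambda=\boW$ whenever $\boS=\emptyset$, the statement is trivial in that case, so we assume $\boS\neq\emptyset$ and recall that $\Lambda=\{q\in\boW:\ q(t)\notin\boS\text{ for all }t\in\R\}$. As $q\in\boK$ already, $q\notin\boK_\Lambda$ would mean $q(t_0)\in\boS$ for some $t_0$, which by $T$-periodicity we may take in $[0,T]$. If $q(t_0)\in\operatorname{int}\boS$, then \eqref{eq:Linftyconv} gives $q_n(t_0)\to q(t_0)$, so $q_n(t_0)\in\boS$ for large $n$, contradicting $q_n\in\boK_\Lambda$. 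Hence $x_0:=q(t_0)\in\partial\boS$, and \eqref{hyp:S} yields $\varepsilon,r>0$ with $-\Phi(t,x)\geq r/|x-x_0|$ whenever $x\in\boS^c$, $|x-x_0|<\varepsilon$ and $|t-t_0|<\varepsilon$.

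The core step is a logarithmic divergence estimate. Set $\delta:=\varepsilon/2$ and $d_n:=|q_n(t_0)-x_0|$, which tends to $0$ by \eqref{eq:Linftyconv}. Since $\|\dot q_n\|_\infty\leq1$, each $q_n$ is $1$-Lipschitz, so for $t\in[t_0-\delta,t_0+\delta]$ we have $|q_n(t)-x_0|\leq|t-t_0|+d_n\leq\delta+d_n<\varepsilon$ once $d_n<\delta$; moreover $q_n(t)\in\boS^c$. Applying \eqref{hyp:S} along $q_n$ on this interval and integrating,
\[
\int_{t_0-\delta}^{t_0+\delta}\bigl(-\Phi(t,q_n(t))\bigr)\,dt\ \geq\ \int_{t_0-\delta}^{t_0+\delta}\frac{r}{|t-t_0|+d_n}\,dt\ =\ 2r\int_0^\delta\frac{ds}{s+d_n}\ =\ 2r\,\ln\!\left(1+\frac{\delta}{d_n}\right)\ \xrightarrow[n\to\infty]{}\ +\infty .
\]
(Here one reads $[t_0-\delta,t_0+\delta]$ modulo $T$ if it straddles the endpoints of $[0,T]$, which is routine.)

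To finish, I would bound the remaining parts of $\boI(q_n)$ from below uniformly in $n$: $\Psi(q_n)\geq0$ because $|\dot q_n|\leq1$; $\bigl|\int_0^T\dot q_n\cdot A(t,q_n)\,dt\bigr|\leq T\sup_{[0,T]\times K}|A|$, finite since $\{q_n\}$ is bounded in $L^\infty$ (being convergent) and so lies in a fixed ball $K\subset\R^3$ on which the continuous $A$ is bounded; and on $[0,T]\setminus[t_0-\delta,t_0+\delta]$ the integrand $-\Phi(t,q_n(t))$ is $\geq-\sup_{\R\times\boS^c}\Phi=:-C'$ since $\Phi$ is bounded above. Summing, $\boI(q_n)\geq -T\sup_{K}|A|-C'T+2r\ln(1+\delta/d_n)\to+\infty$, contradicting the boundedness of $\{\boI(q_n)\}$; therefore $q\in\boK_\Lambda$. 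The only genuinely delicate point is the interplay between the $1$-Lipschitz bound on $q_n$ and the localized singular estimate \eqref{hyp:S} producing the divergent logarithm; the uniform lower bounds on the regular and magnetic terms are immediate from $q_n\in\boK$, the $L^\infty$-boundedness of the sequence, and $\Phi$ being bounded above.
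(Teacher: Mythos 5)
Your proof is correct, but it follows a genuinely different route from the paper's. The paper argues directly from the definition \eqref{def:Lambda}: since $\Psi(q_n)\geq 0$ and the term $\int_0^T\dq_n\cdot A(t,q_n)\,dt$ converges (hence is bounded) by \eqref{eq:Linftyconv}--\eqref{eq:weak*conv}, the boundedness of $\{\boI(q_n)\}$ yields a uniform bound $-\varphi(q_n)\leq C$; then Fatou's lemma (using that $-\Phi$ is bounded below and $\Phi(t,\cdot)$ is continuous on $\boS^c$) gives $-\varphi(q)\leq C<\infty$, i.e.\ $q\in\Lambda$, with no contradiction argument and no explicit use of \eqref{hyp:S} inside the proof --- the Coulomb condition only enters through the earlier identification of $\Lambda$ in Section~3.1. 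You instead argue by contradiction through the geometric characterization $\Lambda=\{q\in\boW:\ q(t)\notin\boS\ \text{for all }t\}$ (itself a consequence of \eqref{hyp:S}, so you are entitled to it), and you make the blow-up quantitative: the $1$-Lipschitz bound on $q_n\in\boK$ combined with \eqref{hyp:S} produces the lower bound $\boI(q_n)\geq 2r\ln(1+\delta/d_n)-C''$, hence $\boI(q_n)\to+\infty$. Both proofs control $\Psi$ and the $A$-term in the same way (you via a uniform sup bound rather than the convergence \eqref{eq:Aconvergence}, which is immaterial). What your approach buys is an explicit divergence rate and a transparent display of where the Coulomb-type hypothesis acts; what the paper's buys is brevity and slightly greater generality, since its Fatou step needs only that $-\Phi$ is bounded below, the singular structure having already been absorbed into the definition of $\Lambda$. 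Your one delicate point --- combining the Lipschitz bound $|q_n(t)-x_0|\leq|t-t_0|+d_n$ with the localized validity window $|x-x_0|,|t-t_0|<\varepsilon$ of \eqref{hyp:S} --- is handled correctly by the choice $\delta=\varepsilon/2$ and $d_n<\delta$, and the periodic wrap-around of the interval is indeed routine.
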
	

\begin{proof}
    Recall that \eqref{eq:Linftyconv}-\eqref{eq:weak*conv} are enough to assure \eqref{eq:Aconvergence}. Then, given that $\{\boI(q_n)\}$ is bounded, we deduce
		\[-\varphi(q_n)=-\int_0^T\Phi(t,q_n(t))dt=\boI(q_n)-\Psi(q_n)-\int_0^T \dq_n\cdot A(t,q_n(t))dt\leq C,\]
    for some constant $C>0$. Thus, since $-\Phi\geq 0$, we may apply Fatou lemma, leading to
		\[-\varphi(q)=-\int_0^T\Phi(t,q(t))dt\leq\liminf_{n\to\infty}\int_0^T(-\Phi(t,q_n(t)))dt\leq C,\]
		where $-\Phi(t,q(t))=\liminf_{n\to\infty}(-\Phi(t,q_n(t)))$. Therefore, $\varphi(q)>-\infty$, and thus,  $q\in\boK_\Lambda$ as defined in \eqref{def:Lambda}.
\end{proof}

\subsection{Compactness of minimizing sequences}

It is remarkable that the decay condition \eqref{cond:decayA} ensures that the term involving the vector potential in $\boI$  vanishes at infinity, even if $A$ itself does not (actually, $A$ could be growing at logarithmic rate). This statement is made rigorous in the following lemma.

\begin{lemma}\label{lemma:vanishing}
    Let $A\in\boA$, and let $\{q_n\}\subset\boK$ be a sequence with $\lim\limits_{n\to\infty}|\bar q_n|=\infty$. Then,
    \begin{equation}
         \lim_{n\to\infty}\int_0^T \dq_n\cdot A(t,q_n)dt= 0.\label{statement:vanishingA}
    \end{equation}
\end{lemma}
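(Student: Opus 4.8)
The plan is to exploit the splitting $q_n=\tilde q_n+\bar q_n$ from \eqref{eq:decomposition}, together with the two $L^\infty$ bounds available on $\boK$: $\|\dq_n\|_\infty\le 1$ and $\|\tilde q_n\|_\infty\le T$ (see \eqref{eq:qtildebounded}). Since $|\bar q_n|\to\infty$, we have $|q_n(t)|\ge|\bar q_n|-T\to\infty$ uniformly in $t$, so along the sequence we only ever evaluate $A$ and its derivatives at points tending to infinity, where \eqref{cond:decayA} is effective. The first step is to write
\[
\int_0^T \dq_n\cdot A(t,q_n)\,dt=\int_0^T \dq_n\cdot\big(A(t,q_n)-A(t,\bar q_n)\big)\,dt+\int_0^T \dq_n\cdot A(t,\bar q_n)\,dt
\]
and estimate the two terms separately.

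For the first (oscillation) term I would use the mean value inequality: for each $t$,
\[
|A(t,q_n(t))-A(t,\bar q_n)|\le |\tilde q_n(t)|\,\sup_{s\in[0,1]}|\nabla A(t,\bar q_n+s\tilde q_n(t))|\le T\,M_n,
\]
where $M_n:=\sup\{|\nabla A(t,y)|:\,t\in[0,T],\ |y|\ge|\bar q_n|-T\}$, since every point of the segment $[\bar q_n,q_n(t)]$ has modulus at least $|\bar q_n|-T$. By \eqref{cond:decayA}, $M_n\to0$, and combined with $\|\dq_n\|_\infty\le1$ the first term is bounded in absolute value by $T^2M_n\to0$.

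For the second (mean) term, the key observation is that $t\mapsto A(t,\bar q_n)$ is $\boC^1$ and $T$-periodic by \eqref{def:periodic}, so integration by parts together with $q_n(0)=q_n(T)$ kills the boundary term:
\[
\int_0^T \dq_n\cdot A(t,\bar q_n)\,dt=-\int_0^T q_n\cdot\partial_t A(t,\bar q_n)\,dt=-\int_0^T \tilde q_n\cdot\partial_t A(t,\bar q_n)\,dt-\bar q_n\cdot\int_0^T \partial_t A(t,\bar q_n)\,dt.
\]
The last integral equals $A(T,\bar q_n)-A(0,\bar q_n)=0$, again by $T$-periodicity, so the potentially large factor $\bar q_n$ disappears entirely, leaving $-\int_0^T\tilde q_n\cdot\partial_t A(t,\bar q_n)\,dt$, whose absolute value is at most $T^2\sup_{t\in[0,T]}|\partial_t A(t,\bar q_n)|\to0$ by \eqref{cond:decayA}. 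Adding the two estimates yields \eqref{statement:vanishingA}.

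The only genuinely non-routine step is the integration by parts in the mean term: it is what allows the large translation $\bar q_n$ to cancel using periodicity alone (not decay), and simultaneously replaces $A$ itself — which is allowed to be unbounded, even to grow logarithmically — by $\partial_t A$, which does decay. Everything else is a direct application of \eqref{cond:decayA} against the uniform bounds on $\tilde q_n$ and $\dq_n$.
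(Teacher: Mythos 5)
Your proposal is correct and follows essentially the same route as the paper: the same splitting of the integral into the oscillation term $\int_0^T \dq_n\cdot(A(t,q_n)-A(t,\bar q_n))\,dt$ handled by the mean value inequality together with $|\bar q_n+s\tilde q_n(t)|\ge|\bar q_n|-T$ and the decay \eqref{cond:decayA}, and the mean term handled by integration by parts to arrive at $-\int_0^T\tilde q_n\cdot\partial_t A(t,\bar q_n)\,dt$. Your variant of the integration by parts (first on $q_n$, then cancelling the $\bar q_n$ contribution via $T$-periodicity of $A$) is only cosmetically different from the paper's step of writing $\dq_n=\frac{d}{dt}\tilde q_n$ before integrating by parts.
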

\begin{proof}

Let us write
\begin{equation}\label{Adecomposed}
    \int_0^T\dq_n\cdot A(t,q_n)dt=\int_0^T \dq_n\cdot (A(t,q_n)-A(t,\bar q_n))dt + \int_0^T\dq_n\cdot A(t,\bar q_n)dt.
\end{equation}
We start by estimating the first term on the right-hand side of \eqref{Adecomposed}. To do so, note that for every $t\in [0,T]$ and every $n$, there exists $s_n(t)\in [0,1]$ such that
\begin{equation*}
    |A(t,q_n(t))-A(t,\bar q_n)|\leq |\nabla A(t,s_n(t)q_n(t)+(1-s_n(t))\bar q_n)||q_n(t)-\bar q_n|.
\end{equation*}
Then,
\begin{equation}\label{firstterm}
    \left|\int_0^T \dq_n\cdot (A(t,q_n)-A(t,\bar q_n))dt\right|\leq \int_0^T|\nabla A(t,s_n(t)q_n(t)+(1-s_n(t))\bar q_n)||\tilde q_n|dt.
\end{equation}
Hence, bearing \eqref{eq:qtildebounded} in mind, and observing that
\[|s_n(t)q_n(t)+(1-s_n(t))\bar q_n)|\geq |\bar q_n|-|\tilde q_n|\geq |\bar q_n|-T,\]
we conclude from \eqref{firstterm} and \eqref{cond:decayA} that
\begin{equation}
    \lim_{n\to\infty}\left|\int_0^T \dq_n\cdot (A(t,q_n)-A(t,\bar q_n))dt\right|=0.
\end{equation}

Concerning the second term in \eqref{Adecomposed}, integrating by parts yields
\begin{equation*}
    \int_0^T \dq_n\cdot A(t,\bar q_n)dt=\int_0^T \frac{d}{dt}(q_n-\bar q_n)\cdot A(t,\bar q_n)dt=-\int_0^T\tilde q_n\cdot \partial_t A(t,\bar q_n)dt.
\end{equation*}
Therefore, again \eqref{eq:qtildebounded} and \eqref{cond:decayA} lead to
\begin{equation*}
    \lim_{n\to\infty}\left|\int_0^T \dq_n\cdot A(t,\bar q_n)dt\right|=0.
\end{equation*}
The proof is complete.
\end{proof}

In the line of Lemma~\ref{lemma:vanishing}, next result shows a vanishing property of the difference $\varphi(q_n)-\varphi(\bar q_n)$ at infinity.

\begin{lemma}\label{lemma:varphivanishing}
    Let $\Phi\in\boP$, and let $\{q_n\}\subset\boK_\Lambda$ be a sequence with $\lim\limits_{n\to\infty}|\bar q_n|=\infty$. Then, 
    \begin{equation}
         \lim_{n\to\infty}\int_0^T \left(\Phi(t,q_n)-\Phi(t,\bar q_n)\right)dt= 0.\label{statement:vanishingPhi}
    \end{equation}
\end{lemma}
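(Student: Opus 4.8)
The plan is to mimic the structure of the proof of Lemma~\ref{lemma:vanishing}, but using only the decay of $\nabla\Phi$ rather than of $\partial_t\Phi$ (which is not available). By the integral form of the mean value theorem, for each $t\in[0,T]$ and each $n$ there is $s_n(t)\in[0,1]$ with
\[
\left|\Phi(t,q_n(t))-\Phi(t,\bar q_n)\right|\leq \left|\nabla\Phi\big(t,s_n(t)q_n(t)+(1-s_n(t))\bar q_n\big)\right|\,|\tilde q_n(t)|.
\]
As in Lemma~\ref{lemma:vanishing}, the convex combination satisfies $|s_n(t)q_n(t)+(1-s_n(t))\bar q_n|\geq |\bar q_n|-|\tilde q_n|\geq |\bar q_n|-T$ thanks to \eqref{eq:qtildebounded}. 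Since $|\bar q_n|\to\infty$, the point at which $\nabla\Phi$ is evaluated escapes to infinity uniformly in $t$, so the decay condition \eqref{cond:decayphi} forces
\[
\max\{|\nabla\Phi(t,y)|:\ t\in[0,T],\ |y|\geq|\bar q_n|-T\}\longrightarrow 0\quad\text{as }n\to\infty.
\]
Combining this with $\|\tilde q_n\|_\infty\leq T$ and integrating over $[0,T]$ yields the claim.

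There is one subtlety that is absent in Lemma~\ref{lemma:vanishing}: here $\Phi$ may be singular on $\boS$, so before invoking the mean value theorem one must make sure that the whole segment from $\bar q_n$ to $q_n(t)$ lies in $\boS^c$, where $\Phi(t,\cdot)$ is $\boC^1$. This is where I expect the main (though minor) obstacle to lie. It can be handled as follows: since $|\bar q_n|\to\infty$ and $\boS$ is compact, for $n$ large we have $\mathrm{dist}(\bar q_n,\boS)>T$; together with $|q_n(t)-\bar q_n|=|\tilde q_n(t)|\leq T$, this shows that the closed segment joining $\bar q_n$ and $q_n(t)$ stays at positive distance from $\boS$ — hence in $\boS^c$ — for all $t$ and all sufficiently large $n$. (For the finitely many small $n$ the statement is about a limit, so they are irrelevant.) Thus the mean value theorem applies on that segment and the estimate above is legitimate. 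Note also that $q_n\in\boK_\Lambda$ only guarantees $q_n(t)\notin\boS$; it is the bound $|\bar q_n|\to\infty$, not membership in $\boK_\Lambda$, that does the real work.

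Finally, assembling the pieces: fix $\varepsilon>0$, choose $N$ so that for $n\geq N$ both $\mathrm{dist}(\bar q_n,\boS)>T$ and $\max\{|\nabla\Phi(t,y)|:t\in[0,T],\,|y|\geq|\bar q_n|-T\}<\varepsilon/T$ hold; then for such $n$,
\[
\left|\int_0^T\big(\Phi(t,q_n)-\Phi(t,\bar q_n)\big)\,dt\right|\leq \int_0^T \frac{\varepsilon}{T}\,|\tilde q_n(t)|\,dt\leq \frac{\varepsilon}{T}\cdot T\cdot T=\varepsilon T,
\]
wait — more carefully, $|\tilde q_n(t)|\le T$ gives the bound $\int_0^T(\varepsilon/T)T\,dt=\varepsilon T$; replacing $\varepsilon$ by $\varepsilon/T$ from the start yields exactly $\varepsilon$. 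Since $\varepsilon$ is arbitrary, \eqref{statement:vanishingPhi} follows. The argument is essentially a one-term version of Lemma~\ref{lemma:vanishing} — there is no analogue of the ``integration by parts / $\partial_t A$'' term because we are not differentiating $\Phi$ in time — so the whole proof is short once the segment-avoids-$\boS$ remark is in place.
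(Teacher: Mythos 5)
Your proof is correct and follows essentially the same route as the paper's: mean value theorem along the segment from $\bar q_n$ to $q_n(t)$, the lower bound $|s q_n(t)+(1-s)\bar q_n|\geq|\bar q_n|-T$ from \eqref{eq:qtildebounded} to guarantee (for large $n$) that the segment avoids the compact set $\boS$ and that the decay \eqref{cond:decayphi} applies, and then integration over $[0,T]$. Your extra care about the segment lying in $\boS^c$ is exactly the point the paper dispatches with ``since $\boS$ is bounded,'' so there is nothing to add.
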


\begin{proof}
    For any $t\in [0,T]$ and any $n$, consider a point in the segment joining $q_n(t)$ with $\bar q_n$, i.e. $sq_n(t)+(1-s)\bar q_n$ for some $s\in [0,1]$. Then, as in the proof of Lemma~\ref{lemma:vanishing}, we have
    \begin{equation}\label{segment}
    |s q_n(t)+(1-s)\bar q_n|\geq |\bar q_n|-T.
    \end{equation}
    This means that, for $n$ large enough independent of $t$, any point in the segment belongs to $\boS^c$, since $\boS$ is bounded. Therefore, the mean value theorem can be applied to obtain some $s_n(t)\in [0,1]$ such that
    \[\Phi(t,q_n(t))-\Phi(t,\bar q_n)=\nabla\Phi(t,s_n(t)q_n(t)+(1-s_n(t))\bar q_n)\cdot \tilde q_n(t).\]
    Hence,
    \[\left|\int_0^T \left(\Phi(t,q_n)-\Phi(t,\bar q_n)\right)dt\right|\leq T\int_0^T|\nabla\Phi(t,s_n(t)q_n(t)+(1-s_n(t))\bar q_n)|dt.\]
    Finally, \eqref{segment} and condition \eqref{cond:decayphi} yield the result.
\end{proof}

A direct consequence of Lemma~\ref{lemma:vanishing} and Lemma~\ref{lemma:varphivanishing} is that $\boI$ is bounded from below, as stated by the following result. 

\begin{lemma}\label{lemma:Ibounded}
    For any $(A,\Phi)\in\AP$, the action functional $\boI:\Lambda\to\R$ is bounded from below.
\end{lemma}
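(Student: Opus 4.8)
The plan is to show that $\boI(q) = \Psi(q) + \boF(q)$ cannot go to $-\infty$ by splitting according to whether $|\bar q|$ is large or bounded. Recall $\Psi(q) \geq 0$ always, and $-\Phi \geq 0$, so the only term that can be negative is $\int_0^T \dq \cdot A(t,q)\,dt$. Since $\varphi(q) = \int_0^T \Phi(t,q)\,dt \leq 0$ whenever it is finite (by the normalization \eqref{eq:supremum-varphi} extended pointwise), we have for any $q \in \boK_\Lambda$
\[
\boI(q) \geq \int_0^T \dq \cdot A(t,q)\,dt - \varphi(q) \geq \int_0^T \dq \cdot A(t,q)\,dt.
\]
So it suffices to bound $\int_0^T \dq \cdot A(t,q)\,dt$ from below uniformly over $\boK_\Lambda$ (indeed over all of $\boK$, since the integrand only involves $A$, which is globally $\boC^1$).

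First I would argue by contradiction: suppose there is a sequence $\{q_n\} \subset \boK$ with $\int_0^T \dq_n \cdot A(t,q_n)\,dt \to -\infty$. If $\{\bar q_n\}$ were bounded, then $\{q_n\}$ would be bounded in $\boW$ (using \eqref{eq:qtildebounded} to control $\|\tilde q_n\|_\infty \leq T$, and $\|\dq_n\|_\infty \leq 1$), hence bounded in $L^\infty$; then on the compact set where all the $q_n$ take values, $|A|$ and $|\nabla A|$ are bounded by some constant $M$, giving $\left|\int_0^T \dq_n \cdot A(t,q_n)\,dt\right| \leq TM$, a contradiction. Therefore $\{\bar q_n\}$ must be unbounded, and passing to a subsequence we may assume $|\bar q_n| \to \infty$. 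But then Lemma~\ref{lemma:vanishing} applies directly and gives $\int_0^T \dq_n \cdot A(t,q_n)\,dt \to 0$, which again contradicts the assumption that this quantity tends to $-\infty$.

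Hence $\int_0^T \dq \cdot A(t,q)\,dt$ is bounded below on $\boK$ by some constant $-C$, and consequently $\boI(q) \geq -C$ for all $q \in \boK_\Lambda$; on $\Lambda \setminus \boK_\Lambda$ the functional equals $+\infty$, so the bound holds on all of $\Lambda$. This proves the statement. The only mildly delicate point is the dichotomy on $\{\bar q_n\}$: one must note that the lower bound we want depends only on the vector potential term (this is why boundedness of $\boI(q_n)$ is not needed and Lemma~\ref{lemma:KLambdaclosed} is not invoked), and that the "bounded $\bar q_n$" case really does force an $L^\infty$ bound on $q_n$ so that the continuity of $A$ on compacts can be used. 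Everything else is a routine combination of $\Psi \geq 0$, $\varphi \leq 0$, and Lemma~\ref{lemma:vanishing}; Lemma~\ref{lemma:varphivanishing} is not actually needed for the lower bound (it is presumably used later, e.g.\ for the negativity-of-infimum or compactness arguments), though one could phrase a slightly different proof that uses it.
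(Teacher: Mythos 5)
Your overall strategy is essentially the paper's: you split according to whether $\bar q$ stays bounded or escapes to infinity, use Lemma~\ref{lemma:vanishing} in the far region, and use the bound $\|q\|_\infty\le T+|\bar q|$ together with continuity of $A$ on compact sets in the near region. You phrase it as a contradiction along sequences while the paper phrases it as a uniform $\varepsilon$--$R$ estimate, but that is only a presentational difference (and your sequential formulation matches the way Lemma~\ref{lemma:vanishing} is actually stated).

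There is, however, one step that is not justified as written: the inequality $\varphi(q)=\int_0^T\Phi(t,q(t))\,dt\le 0$ for every $q\in\boK_\Lambda$, which you attribute to ``the normalization \eqref{eq:supremum-varphi} extended pointwise''. The normalization \eqref{eq:supremum-varphi} only asserts $\int_0^T\Phi(t,x)\,dt\le 0$ for each \emph{fixed} $x\in\boS^c$; it says nothing about the integral of $\Phi$ along a non-constant loop. For example, with $\boS=\emptyset$ take $\Phi(t,x)=\sin(2\pi t/T)\,\psi(x)$ with $\psi\ge 0$ smooth and compactly supported: then $\Phi\in\boP$ and $\varphi\equiv 0$ on constants, yet a loop that sits where $\psi>0$ while $\sin>0$ and where $\psi=0$ while $\sin<0$ gives $\int_0^T\Phi(t,q(t))\,dt>0$. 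So you cannot discard the term $-\int_0^T\Phi(t,q)\,dt$ purely by sign. The gap is easily repaired and does not affect your conclusion: since every $\Phi\in\boP$ is bounded from above, say $\Phi\le c$, you have $-\int_0^T\Phi(t,q)\,dt\ge -Tc$, and your lower bound on $\int_0^T\dq\cdot A(t,q)\,dt$ over $\boK$ then gives $\boI\ge -C-Tc$ on $\boK_\Lambda$. Alternatively you can argue as the paper does, writing $-\int_0^T\Phi(t,q)\,dt=-\int_0^T\bigl(\Phi(t,q)-\Phi(t,\bar q)\bigr)\,dt-\varphi(\bar q)$, invoking Lemma~\ref{lemma:varphivanishing} in the far region and using only $\varphi(\bar q)\le 0$ at constants, which is exactly what \eqref{eq:supremum-varphi} provides; this is why the paper does use Lemma~\ref{lemma:varphivanishing} here, whereas with the boundedness-from-above fix your shorter route genuinely avoids it.
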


\begin{proof}
Given $\varepsilon>0$,  Lemma~\ref{lemma:vanishing} and Lemma~\ref{lemma:varphivanishing}, together with the fact that $\varphi$ is non-positive at the constants, yield the existence of $R>0$ such that
\begin{equation*}
\boI(q)=\Psi(q)+\int_0^T\dq\cdot A(t,q)dt - \int_0^T\left(\Phi(t,q)-\Phi(t,\bar q)\right)dt -\varphi(\bar q)\geq -\varepsilon,
\end{equation*}
for all $q\in\boK_\Lambda$ with $|\bar q|>R$. On the other hand, if $q\in\boK_\Lambda$ satisfies $|\bar q|\leq R$, then  \eqref{eq:qtildebounded} implies that $\|q\|_\infty\leq T+R$, and therefore,
\[\boI(q)\geq -T\max\{|A(t,x)|:\ t\in [0,T],|x|\leq T+R\},\]
for all $q\in\boK_\Lambda$ such that $|\bar q|\leq R$. In any case, $\boI$ is bounded from below.
\end{proof}

The previous lemma yields the existence of minimizing sequences in $\boK_\Lambda$. In view of the semi-continuity property of the action proved in Lemma~\ref{lemma:wlsc}, it only remains to prove a suitable compactness result for the minimizing sequences in order to apply the direct method of the calculus of variations.

We finish the section by stating and proving a principle of least action which ensures the compactness of the minimizing sequences provided that the action attains negative values. 

\begin{theorem}\label{thm:PLA}
    Let $(A,\Phi)\in\AP$ satisfy that $\inf\limits_{q\in\Lambda}\boI(q)<0$. Then, there exists a non-constant ${\bf q}\in\boK_\Lambda$ such that $\boI({\bf q})=\min\limits_{q\in\Lambda}\boI(q)$.
\end{theorem}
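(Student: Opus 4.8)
The strategy is the direct method of the calculus of variations, carried out within the non-singular set $\boK_\Lambda$. By Lemma~\ref{lemma:Ibounded} the functional $\boI$ is bounded from below, so there exists a minimizing sequence $\{q_n\}\subset\boK_\Lambda$ with $\boI(q_n)\to m:=\inf_{q\in\Lambda}\boI(q)<0$; in particular $\{\boI(q_n)\}$ is bounded. The first task is to show this sequence is bounded in $L^\infty$, equivalently that $\{|\bar q_n|\}$ is bounded (since $\|\tilde q_n\|_\infty\leq T$ by \eqref{eq:qtildebounded}). Suppose not; then along a subsequence $|\bar q_n|\to\infty$. I would then decompose
\[
\boI(q_n)=\Psi(q_n)+\int_0^T\dq_n\cdot A(t,q_n)\,dt-\int_0^T\big(\Phi(t,q_n)-\Phi(t,\bar q_n)\big)dt-\varphi(\bar q_n),
\]
and note that $\Psi(q_n)\geq0$, $-\varphi(\bar q_n)\geq0$, while Lemma~\ref{lemma:vanishing} and Lemma~\ref{lemma:varphivanishing} force the two integral terms to $0$. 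Hence $\liminf\boI(q_n)\geq0>m$, contradicting $\boI(q_n)\to m$. Therefore $\{q_n\}$ is bounded in $L^\infty$.

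With the $L^\infty$ bound in hand, Lemma~\ref{lemma:compactness} provides ${\bf q}\in\boK$ such that, up to a subsequence, $q_n\to{\bf q}$ in $L^\infty$ and $\dq_n\rightharpoonup\dot{\bf q}$ weak-$^*$ in $\sigma(L^\infty,L^1)$. Since $\{\boI(q_n)\}$ is bounded, Lemma~\ref{lemma:KLambdaclosed} gives ${\bf q}\in\boK_\Lambda$, so $\boI({\bf q})$ is finite and ${\bf q}$ is admissible. Then Lemma~\ref{lemma:wlsc} yields $\boI({\bf q})\leq\liminf_{n\to\infty}\boI(q_n)=m$. Since ${\bf q}\in\boK_\Lambda\subset\Lambda$ we also have $\boI({\bf q})\geq m$, whence $\boI({\bf q})=m=\min_{q\in\Lambda}\boI(q)$; this also shows the infimum is attained and finite.

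It remains to show ${\bf q}$ is non-constant. This is where the hypothesis $m<0$ is used a second time, and I expect it to be the most delicate point. If ${\bf q}\equiv c$ were constant, then $\dot{\bf q}\equiv0$, so $\Psi({\bf q})=0$ and $\int_0^T\dot{\bf q}\cdot A(t,{\bf q})\,dt=0$, giving $\boI({\bf q})=-\varphi(c)\geq0$ because $\varphi\leq0$ on $\boS^c$ (recall the normalization \eqref{eq:supremum-varphi}). This contradicts $\boI({\bf q})=m<0$. Hence ${\bf q}$ cannot be constant, which completes the proof.

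\textbf{Main obstacle.} The genuinely substantive step is the $L^\infty$ a priori bound on the minimizing sequence: one must rule out that mass escapes to infinity, and this rests entirely on the two vanishing lemmas (Lemma~\ref{lemma:vanishing} and Lemma~\ref{lemma:varphivanishing}) combined with the sign of $\Psi$ and $-\varphi$ and, crucially, the strict negativity $m<0$. Everything else (passing to the limit, lower semicontinuity, closedness of $\boK_\Lambda$, non-constancy) is a routine assembly of the preceding lemmas.
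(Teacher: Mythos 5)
Your proposal is correct and follows essentially the same route as the paper: a minimizing sequence, the two vanishing lemmas plus the signs of $\Psi$ and $-\varphi$ to rule out escape to infinity via the strict negativity of the infimum, then Lemmas~\ref{lemma:compactness}, \ref{lemma:KLambdaclosed} and \ref{lemma:wlsc} to pass to the limit, with non-constancy following because $\boI\geq 0$ on constants. The only cosmetic difference is that you phrase the boundedness step as a contradiction along a subsequence with $|\bar q_n|\to\infty$, while the paper states directly that $\liminf\boI(p_n)\geq 0>c$ for any sequence with $\|p_n\|_\infty\to\infty$; these are the same argument.
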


\begin{remark}\label{remark:PLA}
Let us highlight that \cite[Theorem 7]{ABT} establishes a similar principle of least action for the action functional in the setting of continuous electromagnetic fields. Nevertheless, our approach extends this principle by allowing the presence of singularities in the scalar potential (hence in the electromagnetic field) and by identifying the key condition of a negative infimum of the action. We also stress that the negativity of the infimum not only  ensures the compactness of the minimizing sequences, but also guarantees that all critical points are non-constant, since $\boI(q)\geq 0$ for every $q\in\R^3$.

As discussed in the Introduction, proving the existence of non-constant periodic solutions to the Lorentz force equation has been a challenging task in previous works, with only a few results available on the subject.
\end{remark}

\begin{proof}[Proof of Theorem~\ref{thm:PLA}]
    By Lemma~\ref{lemma:Ibounded}, we can consider a minimizing sequence $\{q_n\}\subset\boK_\Lambda$ such that
    \begin{equation}
        \boI(q_n)\to c:=\inf\limits_{q\in\Lambda}\boI(q)<0.
    \end{equation}    
     Moreover, the non-negativity of $-\left.\varphi\right|_{\R^3}$ and $\Psi$, together with Lemma~\ref{lemma:vanishing} and Lemma~\ref{lemma:varphivanishing}, trivially lead to
\begin{equation}
	\lim\limits_{n\to\infty}\boI(p_n)=\lim\limits_{n\to\infty}(\Psi(p_n)-\varphi(\bar p_n))\geq0>c,\quad\hbox{for all }\{p_n\}\subset\boK\hbox{ with }\lim\limits_{n\to\infty}\|p_n\|_\infty=\infty.
\end{equation}
Therefore, the sequence $\|q_n\|_\infty$ is uniformly bounded, and thus satisfies the convergences from \eqref{eq:Linftyconv} and \eqref{eq:weak*conv} to some ${\bf q}\in\boK$. In fact, by Lemma~\ref{lemma:KLambdaclosed}, we have that ${\bf q}\in\boK_\Lambda$. Finally, by Lemma~\ref{lemma:wlsc}, the lower semi-continuity of $\boI$ under these convergences gives that
\begin{equation}
    \boI({\bf q})\leq\lim\limits_{n\to\infty}\boI(q_n)=\inf\limits_{q\in\Lambda}\boI(q)\leq\boI({\bf q}).
\end{equation}
This concludes the proof.
\end{proof}

\section{Existence of minimizer when \texorpdfstring{$\Phi\equiv 0$}{Phi = 0}}\label{SectionMinimum}

In this section we prove Theorem~\ref{MainTheorem1}, which corresponds with the particular case of continuous electromagnetic fields in the absence of scalar potential in \eqref{Potential-Fields}.

As previously commented, the set $\Lambda$ defined in \eqref{def:Lambda} trivially coincides with the whole space $\boW$ when $\Phi\equiv0$. Therefore, the action functional $\boI_0 : \boW \to (-\infty, \infty]$ associated with Equation~\eqref{eq:LFE0} is defined as follows:
\begin{equation}\label{eq:functional-ZeroPhi}
    \boI_0(q)=\left\{\begin{array}{ll}
    	\displaystyle\int_0^T \left(1-\sqrt{1-|\dq|^2}\right)dt + \displaystyle\int_0^T \dq\cdot A(t,q)dt, \ &\text{ if }q\in\boK,\vspace{0.3cm}\\
    	\infty, &\text{ if }q\in\boW\setminus\boK,
    \end{array}\right.
\end{equation}
where the smooth part \eqref{eq:F} of $\boI_0$ becomes now

 \begin{align}\label{eq:F0}
    \boF_0(q)=\int_0^T \dq\cdot A(t,q) dt,\ \hbox{ for all }q\in\boW.
\end{align}
In this setting, we show that the functional $\boI_0$ attains negative values whenever the vector potential $A(t,x)$ belongs to the class $\boA$. By Theorem~\ref{thm:PLA}, this guarantees the existence of a non-constant ${\bf q}\in\boK$ that is a global minimize of $\boI_0$, and therefore a critical point. As a result, since ${\bf q}\in\boK$ is a periodic solution of \eqref{eq:LFE0}, the uniform bound $\|\dot{\bf q}\|_\infty<\rho<1$ is a direct consequence of Lemma~\ref{lemma:uniformbound} in the Appendix. After this argument, Theorem~\ref{MainTheorem1} becomes an immediate consequence of the next key lemma on the existence of negative values.
\begin{lemma}\label{lemma:negative}
For any $A\in\boA$ there exists a point $p\in\boK$ such that $\boI_0(p)<0$.
\end{lemma}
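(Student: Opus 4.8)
The plan is to exhibit, for a given $A\in\boA$, an explicit small-amplitude test curve $p\in\boK$ along which $\boI_0$ is negative. The guiding heuristic is scaling: if $p$ oscillates with amplitude $\delta$ and we keep $|\dot p|$ bounded well below $1$, then $\Psi(p)=\int_0^T(1-\sqrt{1-|\dot p|^2})\,dt$ behaves like $\tfrac12\|\dot p\|_2^2$, which is quadratic in $\delta$, whereas $\boF_0(p)=\int_0^T\dot p\cdot A(t,p)\,dt$ can be made linear in $\delta$ once we extract the right first-order term. So the whole point is to choose the direction of oscillation so that the linear part of $\boF_0$ is nonzero; then for $\delta$ small the linear (indefinite, hence possibly negative) term dominates the quadratic (positive) one. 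This is exactly the mechanism the Introduction advertises: exploiting the sign-indefinite first-order term against the non-negative relativistic term.

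Concretely, I would first use the hypothesis $\partial_t A\not\equiv 0$ from \eqref{def:periodic}: pick $(t^*,x^*)$ and a unit vector $e\in\R^3$ with $e\cdot\partial_t A(t^*,x^*)\neq 0$. Then consider the family $p_\delta(t)=x^* + \delta\, g(t)\, e$ for a fixed smooth $T$-periodic scalar function $g$ with $\int_0^T g=0$ (so $\bar p_\delta=x^*$), to be chosen. For $\delta$ small, $\|\dot p_\delta\|_\infty\le\delta\|g'\|_\infty\cdot\|e\|\le 1$, so $p_\delta\in\boK$, and $\Psi(p_\delta)=O(\delta^2)$ uniformly (indeed $\le \tfrac12\|\dot p_\delta\|_2^2 + O(\delta^4) = O(\delta^2)$, using $1-\sqrt{1-s}\le s$ for $s\in[0,1]$). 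For the first-order term, integrate by parts exactly as in the proof of Lemma~\ref{lemma:vanishing}: writing $A(t,p_\delta(t)) = A(t,x^*) + O(\delta)$ uniformly in $t$ by $\boC^1$-regularity,
\begin{align*}
\boF_0(p_\delta) &= \int_0^T \dot p_\delta(t)\cdot A(t,x^*)\,dt + O(\delta^2)
= \delta\int_0^T g'(t)\, e\cdot A(t,x^*)\,dt + O(\delta^2)\\
&= -\delta\int_0^T g(t)\, e\cdot \partial_t A(t,x^*)\,dt + O(\delta^2).
\end{align*}
Here I used $\int_0^T g' (e\cdot A(\cdot,x^*))\,dt = -\int_0^T g\,(e\cdot\partial_t A(\cdot,x^*))\,dt$ after integration by parts in $t$, and the $O(\delta^2)$ error absorbs both the $\dot p_\delta\cdot(A(t,p_\delta)-A(t,x^*))$ contribution and the remainder in the Taylor expansion.

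It remains to choose $g$ so that the integral $\int_0^T g(t)\,(e\cdot\partial_t A(t,x^*))\,dt$ is nonzero. The function $h(t):=e\cdot\partial_t A(t,x^*)$ is continuous, $T$-periodic, and by construction $h(t^*)\neq 0$; also $\int_0^T h = e\cdot(A(T,x^*)-A(0,x^*)) = 0$ by \eqref{def:periodic}, so $h\not\equiv 0$ and $h$ changes sign (or at least is not identically zero with zero mean). Then taking $g = h$ itself gives $\int_0^T g\,h\,dt = \|h\|_2^2 > 0$, and automatically $\int_0^T g = 0$; so with this choice $\boF_0(p_\delta) = -\delta\|h\|_2^2 + O(\delta^2)$. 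Hence $\boI_0(p_\delta) = \Psi(p_\delta) + \boF_0(p_\delta) \le C\delta^2 - \delta\|h\|_2^2 + O(\delta^2) < 0$ for $\delta>0$ small enough, and setting $p := p_\delta$ for such a $\delta$ completes the proof. The only genuine subtlety — the part I would be most careful about — is making every error term genuinely $O(\delta^2)$ uniformly in $t$ (which needs local, not global, control of $\nabla A$, fine since $p_\delta$ stays in a fixed compact neighborhood of $x^*$) and ensuring $p_\delta\in\boK$, i.e. $\|\dot p_\delta\|_\infty\le 1$, which is immediate for small $\delta$; everything else is the routine scaling computation sketched above.
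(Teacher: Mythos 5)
Your mechanism is the same as the paper's: perturb a constant point by a small zero-mean oscillation chosen so that the first-order term $\int_0^T\dot p\cdot A(t,p)\,dt$ is strictly negative and linear in the small parameter, while the relativistic term and the Lipschitz remainder are quadratic (the paper takes $p=b-\varepsilon\tilde g$ with $\dot g=\tilde A(\cdot,b)$, so the leading term is exactly $-\varepsilon\|\tilde A(\cdot,b)\|_2^2$, and controls the remainder via the global Lipschitz constant of $A$ and Poincar\'e--Wirtinger). The one step in your write-up that does not work as stated is the final choice $g=h$ with $h(t)=e\cdot\partial_t A(t,x^*)$: since $A$ is only of class $\boC^1$, the function $h$ is merely continuous, so $p_\delta(t)=x^*+\delta h(t)e$ need not be Lipschitz (hence need not belong to $\boW$, let alone $\boK$), and the derivative $g'$ appearing in your computation need not exist. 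This is easily repaired: either choose any smooth $T$-periodic $g$ with $\int_0^T g\,h\,dt\neq0$, which exists because $h\not\equiv0$ and because subtracting the mean of $g$ does not change the pairing (recall $\int_0^T h\,dt=0$); or, closer to the paper, oscillate along a primitive, i.e.\ take $\dot g=e\cdot\tilde A(\cdot,x^*)$ for a choice of $e$ and $x^*$ making $e\cdot A(\cdot,x^*)$ non-constant, so that the leading term becomes $-\delta\|e\cdot\tilde A(\cdot,x^*)\|_2^2$ with no further selection needed. With that correction, the remaining ingredients of your argument (local Lipschitz control of $A$ on a compact neighborhood of $x^*$, the inequality $1-\sqrt{1-s^2}\leq s^2$, and $p_\delta\in\boK$ for small $\delta$) are sound and coincide with the paper's estimates.
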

\begin{proof}
We begin by recalling that $A$ is globally Lipschitz continuous in $x$, uniformly in $t$. Specifically, given $A\in\boA$ there exists a finite number $M>0$ such that
\begin{equation}\label{eq:Lip}
	|A(t,x)-A(t,y)|\leq M |x-y|,\quad\text{ for every }t\in [0,T],\, x,y\in\R^3,
\end{equation}
where
\begin{equation}\label{def:Jac}
	M=\|\nabla A(t,x)\|_{L^\infty(\R^4)}.
\end{equation}

Next, since $\partial_t A\not\equiv0$, there exists $b\in\R^3$ such that the function $A(\cdot,b):[0,T]\to\R$ is non-constant. Recalling the notation \eqref{eq:decomposition}, let us take the primitive 
\begin{equation}\label{def:g(t)}
	g(t)=\int_0^t \tilde A(s,b) ds,\ \text{for every }t\in\R.
\end{equation}
Obviously we have that $g\in\boW$ with non-trivial derivative $\dot{g}\not\equiv 0$. In particular,
\begin{equation}\label{HypLemma}
   A(t,b)=\dot{g}(t),\hbox{ for all }t\in[0,T].
\end{equation}
On the other hand, we also define 
\begin{equation}\label{def:p-eps}
  p=-\varepsilon \tilde g + b,\quad\hbox{for all }\varepsilon>0,
\end{equation}
where $\varepsilon$ can be chosen sufficiently small to ensure that $p\in\boK$. Then,
\begin{align*}
    \int_0^T \dpp\cdot A(t,p) dt=\int_0^T\dpp\cdot\left(A(t,p)-A(t,b)+\dot{g}\right) dt=\int_0^T\dpp\cdot\left(A(t,p)-A(t,b)\right) dt-\varepsilon\|\dot{g}\|_2^2.
\end{align*}
Moreover, using the Lipschitz condition \eqref{eq:Lip}-\eqref{def:Jac}, we proceed as follows:
    \begin{align*}
        \Bigg|\int_0^T&\dpp\cdot(A(t,p) -A(t,b))dt\Bigg| = \varepsilon\left|\int_0^T \dot{g}\cdot\left(A(t,p)-A(t,b)\right)\ dt\right|\leq \varepsilon M\int_0^T |\dot{g}||p-b|dt
        \\
         &=\varepsilon^2M\int_0^T |\dot{g}||\tilde g|dt \leq \varepsilon^2M\left(\int_0^T |\tilde g|^2dt\right)^\frac12 \left(\int_0^T|\dot{g}|^2dt\right)^\frac12\leq \frac{\varepsilon^2MT}{\pi}\|\dot{g}\|_2^2,
    \end{align*}
where the last step follows directly from the Poincaré-Wirtinger inequality \eqref{eq:poincare}.

Finally, since
\begin{equation}\label{def:relativistic-ineq}
    1-\sqrt{1-s^2}\leq s^2,\quad \text{for all }s\in[0,1],
\end{equation}
    we deduce that
    \begin{equation}\label{ineq:epsilon1}
        \boI_0(p)\leq-\varepsilon\|\dot{g}\|_2^2   \left(1-\varepsilon\frac{\pi+MT}{\pi}\right).
    \end{equation}
  Hence, we obtain that $\boI_0(p)<0$ for all sufficiently small $\varepsilon>0$.
\end{proof}

\section{Extensions to \texorpdfstring{$\Phi\not\equiv 0$}{} with dominant vector potential}\label{Section-Phi}
Similarly to Theorem~\ref{MainTheorem1}, in this section we prove both Theorem~\ref{MainTheorem2} and Theorem \ref{MainTheorem3} as consequences of minimizing the associated action functional $\boI$ from \eqref{eq:functional} via the application of Theorem~\ref{thm:PLA} at each case. Moreover, as commented in Section~\ref{SectionMinimum}, the existence of an uniform bound $\rho\in(0,1)$ for the derivative of the minimizer when $\boS=\emptyset$ follows directly from the Lemma~\ref{lemma:uniformbound} in the Appendix.

In what follows we fix a compact subset $\boS\subset\R^3$. Let us also recall the function $\varphi:\boS^c\to\R$ defined in \eqref{def:varphi}, i.e.
\begin{equation}
    \varphi(x)=\int_0^T\Phi(t,x)dt,\quad\hbox{with }\Phi\in\boP,
\end{equation}
which is non-positive. 

After these remarks, Theorem~\ref{MainTheorem2} follows directly from the next result.
\begin{proposition}\label{MainTheorem2'}
    Let $(A,\Phi)\in\AP$ satisfy conditions \eqref{cond:varphi0}, \eqref{cond:varphi1} and \eqref{cond:varphi2}, for some sequence $\{b_n\}\subset\R^3$ with $\lim\limits_{n\to\infty}|b_n|=\infty$. Then there exists $q\in\boK_\Lambda$ such that $\boI(q)<0$.
\end{proposition}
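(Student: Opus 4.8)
The plan is to follow the strategy of Lemma~\ref{lemma:negative} (the case $\Phi\equiv0$), but to place the test loop around the point $b_n$ with $n$ large, so that the decay conditions \eqref{cond:varphi1}--\eqref{cond:varphi2} let us absorb the scalar-potential contribution into the negative term generated by the vector potential. Writing $\psi_n:=\tilde A(\cdot,b_n)$, which is non-trivial for every $n$ by \eqref{cond:varphi0} (a non-constant $A(\cdot,b_n)$ forces $\psi_n\not\equiv0$), I would set
\begin{equation}
	g_n(t)=\int_0^t\psi_n(s)\,ds,\qquad p_n=-\varepsilon\,\tilde g_n+b_n,
\end{equation}
for a parameter $\varepsilon>0$ to be fixed later, independently of $n$, so that $\dot p_n=-\varepsilon\psi_n$ and $\tilde p_n=p_n-b_n=-\varepsilon\tilde g_n$. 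Two preliminary facts are needed. First, since $|b_n|\to\infty$, condition \eqref{cond:decayA} gives $\|\partial_t A(\cdot,b_n)\|_\infty\to0$, and since $\psi_n$ has zero mean with $\dot\psi_n=\partial_t A(\cdot,b_n)$, also $\|\psi_n\|_\infty\to0$; hence $\|\dot p_n\|_\infty=\varepsilon\|\psi_n\|_\infty\le1$ and $p_n\in\boK$ for all large $n$, with $\|p_n-b_n\|_\infty\le T$ by \eqref{eq:qtildebounded}. Second, $|p_n(t)|\ge|b_n|-T\to\infty$ uniformly in $t$, so $p_n(t)\notin\boS$ for all $t$ once $n$ is large, i.e.\ $p_n\in\boK_\Lambda$, and $\varphi(b_n)$ is finite.

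Next I would estimate $\boI(p_n)=\Psi(p_n)+\int_0^T\dot p_n\cdot A(t,p_n)\,dt-\int_0^T\Phi(t,p_n)\,dt$ term by term. Using \eqref{def:relativistic-ineq}, $\Psi(p_n)\le\varepsilon^2\|\psi_n\|_2^2$. For the vector-potential term, decomposing $A(t,p_n)=\bigl(A(t,p_n)-A(t,b_n)\bigr)+\psi_n(t)+\overline{A(\cdot,b_n)}$, the constant part integrates to zero against $\dot p_n$, the term $\psi_n$ contributes exactly $-\varepsilon\|\psi_n\|_2^2$, and the remaining difference is bounded, exactly as in Lemma~\ref{lemma:negative}, via the global Lipschitz bound $M:=\|\nabla A\|_{L^\infty(\R^4)}$ and the Poincaré--Wirtinger inequality \eqref{eq:poincare}, by $\tfrac{MT}{\pi}\varepsilon^2\|\psi_n\|_2^2$. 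For the scalar-potential term I would split $-\int_0^T\Phi(t,p_n)\,dt=-\int_0^T\bigl(\Phi(t,p_n)-\Phi(t,b_n)\bigr)\,dt-\varphi(b_n)$; since for $n$ large the segment joining $p_n(t)$ to $b_n$ lies in $\{|x|\ge|b_n|-T\}\subset\boS^c$, the mean value theorem and \eqref{eq:poincare} bound the remainder by $\tfrac{T^{3/2}}{\pi}\varepsilon\,\mu_n\|\psi_n\|_2$, where $\mu_n:=\max\{|\nabla\Phi(t,y)|:t\in[0,T],\,|y|\ge|b_n|-T\}$. Collecting the bounds,
\begin{equation}
	\boI(p_n)\le-\varepsilon\|\psi_n\|_2^2\left(1-\varepsilon\,\frac{\pi+MT}{\pi}-\frac{T^{3/2}}{\pi}\,\frac{\mu_n}{\|\psi_n\|_2}+\frac{\varphi(b_n)}{\varepsilon\|\psi_n\|_2^2}\right).
\end{equation}
Now, fixing $\varepsilon$ small enough that $1-\varepsilon\tfrac{\pi+MT}{\pi}\ge\tfrac12$, condition \eqref{cond:varphi2} forces $\mu_n/\|\psi_n\|_2\to0$ and condition \eqref{cond:varphi1} forces $\varphi(b_n)/\|\psi_n\|_2^2\to0$ (crucially, $\varepsilon$ is now a fixed constant), so the parenthesis is $\ge\tfrac14$ for $n$ large; since $\|\psi_n\|_2>0$ this yields $\boI(p_n)<0$, and $q:=p_n\in\boK_\Lambda$ is the desired point.

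Finally, the main obstacle, and the reason conditions \eqref{cond:varphi1}--\eqref{cond:varphi2} are imposed, is that the boundary contribution $-\varphi(b_n)\ge0$ of the scalar potential pushes $\boI$ in the \emph{wrong} direction; the construction must therefore manufacture a negative term, here $-\varepsilon\|\tilde A(\cdot,b_n)\|_2^2$, quantitatively strong enough to dominate both $-\varphi(b_n)$ and the gradient remainder, uniformly as $n\to\infty$. This explains why the relevant quantity is the oscillation $\tilde A$ (insensitive to the gauge-dependent constant part of $A$) and why \eqref{cond:varphi1}--\eqref{cond:varphi2} are phrased as ratios against $\|\tilde A(\cdot,b_n)\|_2$. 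A secondary technical point is that taking $\varepsilon$ to be a fixed constant (which is what makes \eqref{cond:varphi1} directly applicable) is legitimate only because $\|\tilde A(\cdot,b_n)\|_\infty\to0$, and this in turn follows from the decay \eqref{cond:decayA} together with the zero-mean property of $\tilde A(\cdot,b_n)$.
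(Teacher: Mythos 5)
Your proposal is correct and follows essentially the same route as the paper's proof: the same test loops $p_n=b_n-\varepsilon\tilde g_n$ built from $\tilde A(\cdot,b_n)$, the same Lipschitz/Poincar\'e--Wirtinger estimates, and the same use of \eqref{cond:varphi1}--\eqref{cond:varphi2} to absorb the scalar-potential terms for a fixed small $\varepsilon$ and $n$ large. The only cosmetic differences are that you justify $p_n\in\boK$ via $\|\tilde A(\cdot,b_n)\|_\infty\to 0$ (the paper instead uses the uniform bound $\|\tilde A(\cdot,b_n)\|_\infty\leq T\|\partial_t A\|_\infty$) and your constants differ harmlessly from the paper's.
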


\begin{proof}
    Let $\{b_n\}\subset\boS^c$ be the sequence with $\lim_{n\to\infty}|b_n|=\infty$ given by the statement. Similarly to \eqref{def:g(t)}, let us define the periodic function $g_n\in\boW$ as follows
\begin{equation}
    g_n(t)=\int_0^t\tilde A(s,b_n)ds,\quad \text{for all }n\in\mathbb{N}.
\end{equation}
Recall that $A(t,b_n)$ is non-constant for all $n\in\mathbb{N}$, and so
\begin{equation}
    \dot g_n(t)=\tilde A(t,b_n)\not\equiv 0,\quad\hbox{for all }n\in\mathbb{N}.
\end{equation}
Additionally, for $\varepsilon>0$, let $p_n$ be defined analogous to \eqref{def:p-eps}:
\begin{equation}
    p_n=-\varepsilon\tilde g_n + b_n,\quad\hbox{for all }n\in\mathbb{N}.
\end{equation} 
Since $\tilde A(t,b_n)\in\tilde\boW$, for all $n\in\mathbb{N}$, it follows that
\begin{equation}
    \max\limits_{t\in[0,T]}|\tilde A(t,b_n)|\leq T\|\partial_t A(t,x)\|_{L^\infty(\R^4)}\leq TM,\quad\hbox{for all }n\in\mathbb{N},
\end{equation}
where $M$ is given in \eqref{def:Jac}. Therefore, the sequence of norms $\|\dot g_n\|_\infty$ is uniformly bounded, and $\varepsilon>0$ can be chosen sufficiently small so that $p_n\in\boK$ for all $n\in\mathbb{N}$. Consequently, proceeding as in the proof of Lemma~\ref{lemma:negative}, we obtain the inequality
    \begin{equation}\label{ineq:epsilon2}
        \boI(p_n)\leq-\varepsilon\|\dot{g}_n\|_2^2   \left(1-\varepsilon\frac{\pi+MT}{\pi}\right)-\int_0^T\Phi(t,p_n)dt,\quad\hbox{for all }n\in\mathbb{N},
    \end{equation}
    which trivially reduces to \eqref{ineq:epsilon1} when $\Phi \equiv 0$. Hence, we now focus on estimating the term involving $\Phi$. First, notice that, for any $s\in [0,1]$, we have 
\begin{equation*}
    |s p_n(t)+(1-s)b_n|\geq |b_n|-\varepsilon| \tilde g_n|,\quad\hbox{for all }n\in\mathbb{N}. 
\end{equation*}
Then, recalling that 
\begin{equation}
    \|\tilde g_n\|_\infty\leq T\|\dot g_n\|_\infty,\quad\hbox{and}\quad\varepsilon \|\dot g_n\|_\infty<1,\quad\hbox{for all }n\in\mathbb{N},
\end{equation}it follows that
\begin{equation}\label{ineq:tvm}
    |s p_n(t)+(1-s)b_n|\geq |b_n|-T.
\end{equation}
    In particular, for all $n$ large enough, independent of $t$, every point in the segment joining $p_n(t)$ to $b_n$ belongs to $\boS^c$. Thus, by the mean value theorem there exists $s_n(t)\in [0,1]$ such that
\begin{align*}
    \left|\int_0^T(\Phi(t,p_n)-\Phi(t,b_n))dt\right|&\leq \varepsilon\int_0^T|\tilde g_n||\nabla\Phi(t,s_n(t)p_n+(1-s_n(t))b_n)|dt
    \\
    &\leq\frac{\varepsilon T}{\pi}\|\dot g_n\|_2\left(\int_0^T \left|\nabla\Phi(t,s_n(t)p_n+(1-s_n(t))b_n)\right|^2dt\right)^\frac12,
\end{align*}
where \eqref{eq:poincare} is used in the last step. As a consequence, recalling \eqref{ineq:tvm}, we deduce
\begin{equation*}
    \left|\int_0^T(\Phi(t,p_n)-\Phi(t,b_n))dt\right|\leq \frac{\varepsilon T^2}{\pi}\|\dot g_n\|_2\max\{|\nabla\Phi(t,x)|:\, t\in [0,T],\, |x|\geq|b_n|-T\}.
\end{equation*}
Therefore, condition \eqref{cond:varphi2} allows us to take $n$ sufficiently large so that
\begin{equation*}
    \left|\int_0^T(\Phi(t,p_n)-\Phi(t,b_n))dt\right|\leq \varepsilon^2\|\dot g_n\|_2^2.
\end{equation*}
Substituting this into \eqref{ineq:epsilon2} yields
\begin{equation}\label{ineq:epsilon2'}
        \boI(p_n) \leq-\varepsilon\|\dot{g}_n\|_2^2   \left(1-\varepsilon\frac{2\pi+MT}{\pi}\right)- \int_0^T\Phi(t,b_n)dt,
    \end{equation}
    for all $n\in\mathbb{N}$ sufficiently large. In addition, since $\int_0^T\Phi(t,b_n)dt=\varphi(b_n)$, condition \eqref{cond:varphi1} implies that
\begin{equation*}
        \boI(p_n) \leq-\varepsilon\|\dot{g}_n\|_2^2   \left(1-\varepsilon\frac{3\pi+MT}{\pi}\right),
    \end{equation*}
    for all $n \in \mathbb{N}$ sufficiently large. Therefore, taking $\varepsilon < \left(3 + MT\pi^{-1}\right)^{-1}$, we obtain that $\boI(p_n) < 0$, for all $n \in \mathbb{N}$ sufficiently large.
\end{proof}

We conclude this section by proving the existence of negative values of the action functional within the framework of Theorem~\ref{MainTheorem3}. The proof in this case differs significantly from that of Proposition~\ref{MainTheorem2'}, as it relies on a negative gradient flow. We stress that Lemma~\ref{lemma:negative} could also have been established using this alternative approach. The precise statement is as follows.

\begin{proposition}\label{MainTheorem3'}
	Let $(A,\Phi)\in\AP$ satisfy condition \eqref{cond:nonequilibrium-ElecField}. Then there exists $q\in\boK_\Lambda$ such that ~$\boI(q)<0$. 
\end{proposition}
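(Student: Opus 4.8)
The plan is to perturb the constant loop $q_0\equiv b$, at which $\boI(q_0)=0$, along a direction that strictly decreases the smooth part $\boF$; the relativistic term $\Psi$ will only enter at second order and hence will not obstruct the descent. First, since $b\in M_\Phi$ we have $\varphi(b)=\max\varphi=0$ by the normalization \eqref{eq:supremum-varphi}, and since $b\in\boS^c$ (so that $q_0\in\boK_\Lambda\subset\Lambda$) this gives $\boF(q_0)=-\varphi(b)=0$; together with $\dot q_0\equiv 0$, which forces $\Psi(q_0)=0$, we obtain $\boI(q_0)=0$. Set $v(t):=\partial_t A(t,b)+\nabla\Phi(t,b)$, a continuous $T$-periodic map which, by hypothesis \eqref{cond:nonequilibrium-ElecField}, is not identically zero. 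Using that $\boF$ is of class $\boC^1$ on $\Lambda$, a direct computation of its differential at $q_0$ — using $\dot q_0\equiv 0$ to kill the term coming from $\partial_x A$, and integrating $\int_0^T\dot h\cdot A(t,b)\,dt$ by parts via the $T$-periodicity of $h$ and of $A(\cdot,b)$ — yields
\[
\boF'(q_0)[h]=\int_0^T\dot h\cdot A(t,b)\,dt-\int_0^T\nabla\Phi(t,b)\cdot h\,dt=-\int_0^T h(t)\cdot v(t)\,dt,\qquad h\in\boW .
\]

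Since $v$ is continuous and $v\not\equiv0$, the fundamental lemma of the calculus of variations provides a smooth $T$-periodic $h\in\boW$ with $\int_0^T h\cdot v\,dt\neq 0$; replacing $h$ by $-h$ if necessary we may assume $\int_0^T h\cdot v\,dt>0$, i.e. $\boF'(q_0)[h]<0$. For $\varepsilon>0$ put $p_\varepsilon:=b+\varepsilon h=q_0+\varepsilon h\in\boW$. Because $h$ is bounded and $\boS$ is compact with $b\notin\boS$, the distance from $b$ to $\boS$ is positive, so for all sufficiently small $\varepsilon$ one has $p_\varepsilon(t)\in\boS^c$ for every $t$; shrinking $\varepsilon$ further so that $\varepsilon\|\dot h\|_\infty\le 1$ gives $p_\varepsilon\in\boK$, hence $p_\varepsilon\in\boK_\Lambda$. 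By the elementary inequality \eqref{def:relativistic-ineq} we have $\Psi(p_\varepsilon)\le\varepsilon^2\|\dot h\|_2^2$, while the $\boC^1$ expansion of $\boF$ at $q_0$ gives $\boF(p_\varepsilon)=\boF(q_0)+\varepsilon\,\boF'(q_0)[h]+o(\varepsilon)=\varepsilon\,\boF'(q_0)[h]+o(\varepsilon)$. Therefore
\[
\boI(p_\varepsilon)=\Psi(p_\varepsilon)+\boF(p_\varepsilon)\le\varepsilon^2\|\dot h\|_2^2+\varepsilon\,\boF'(q_0)[h]+o(\varepsilon)=-\varepsilon\int_0^T h\cdot v\,dt+o(\varepsilon),
\]
which is strictly negative for all sufficiently small $\varepsilon>0$; taking $q:=p_\varepsilon$ for one such $\varepsilon$ finishes the proof. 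This is exactly the first instant of the negative gradient flow of $\boF$ — equivalently of $\boI$, modulo the $O(\varepsilon^2)$ relativistic contribution — issuing from the non-critical point $q_0$, as alluded to before the statement; the same scheme re-proves Lemma~\ref{lemma:negative} upon taking $\Phi\equiv 0$ and $b\in\R^3$ with $A(\cdot,b)$ non-constant.

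The routine parts are the two elementary estimates, namely $\Psi(p_\varepsilon)=O(\varepsilon^2)$ via \eqref{def:relativistic-ineq} and keeping $p_\varepsilon$ inside $\boK_\Lambda$, which parallels the arguments of Lemma~\ref{lemma:negative} and Proposition~\ref{MainTheorem2'}. The crux is the computation of $\boF'(q_0)$ and the observation that it is nonzero precisely under \eqref{cond:nonequilibrium-ElecField} — so $q_0$ fails to be a critical point of $\boF$ — while the assumption $b\in M_\Phi$ is what makes the base value $\boF(q_0)=-\varphi(b)$ vanish rather than be strictly positive; absent either hypothesis the linear term cannot overcome a positive constant. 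The one point that demands care is that the definition of $\boP$ does not postulate joint regularity of $\nabla\Phi$ in $(t,x)$, so I would deliberately route the expansion of the $\Phi$-term through the already-established $\boC^1$ regularity of $\boF$ on $\Lambda$ rather than through a direct mean value theorem applied to $\int_0^T\Phi(t,b+\varepsilon h)\,dt$, in contrast with Lemma~\ref{lemma:varphivanishing} and Proposition~\ref{MainTheorem2'}, where only first-order spatial differences of $\Phi$ occur.
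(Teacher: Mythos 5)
Your proposal is correct, and it reaches the conclusion by a genuinely different (and more elementary) route than the paper. The paper also starts from the observation that $\boI(b)=0$ and $\boI'(b)[\psi]=-\int_0^T\psi\cdot\big(\partial_t A(t,b)+\nabla\Phi(t,b)\big)\,dt\not\equiv 0$ under \eqref{cond:nonequilibrium-ElecField}, but then it invokes Willem's pseudogradient lemma to build a locally Lipschitz field $V$ on the open set $\Omega=\{q\in\Lambda:\|\dq\|_\infty<1,\ \boI'(q)\neq 0\}$, solves the Banach-space ODE $\frac{dx}{d\tau}=-V(x)$, $x(0)=b$, and concludes from the strict decrease of $\tau\mapsto\boI(x(\tau))$ that $\boI(x(\tau))<0$ for small $\tau$. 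You instead fix one descent direction $h$ with $\boF'(b)[h]<0$, perturb $p_\varepsilon=b+\varepsilon h$, and beat the relativistic term by the quadratic bound $\Psi(p_\varepsilon)\leq\varepsilon^2\|\dot h\|_2^2$ from \eqref{def:relativistic-ineq}, exactly in the spirit of the explicit constructions of Lemma~\ref{lemma:negative} and Proposition~\ref{MainTheorem2'} (where the paper's choice $p=b-\varepsilon\tilde g$ is precisely such a descent direction). What your route buys is the avoidance of the pseudogradient machinery and of the flow's well-posedness and invariance in $\Omega$; what the paper's flow buys is that it tracks the full functional $\boI$ at once (no separate control of $\Psi$) and packages the argument in a form the authors note would also re-prove Lemma~\ref{lemma:negative}. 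Both arguments rest on the same two facts, $\boI(b)=0$ (from $b\in M_\Phi$ and \eqref{eq:supremum-varphi}) and $\boI'(b)\neq 0$, and both rely on the same implicit regularity of $\boF$ (its $\mathcal{C}^1$ character on $\Lambda$, hence enough joint regularity of $\nabla\Phi$ near $\{b\}$), a point you rightly flag; your use of the fundamental lemma to pick $h$ is on the same footing as the paper's assertion that $\boI'(b)\neq 0$.
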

\begin{proof}
Fix $(A,\Phi)\in\AP$ as in the statement, and take $b\in\boS^c$ according to hypothesis \eqref{cond:nonequilibrium-ElecField}, that is,
\begin{equation}
    \varphi(b)=0,\quad\hbox{and}\quad\partial_t A(\cdot,b)+\nabla\Phi(\cdot,b)\not\equiv0.
\end{equation}
Note that, trivially, $\boI(b)=-\varphi(b)=0$. Moreover, since $\boI(q)$ is differentiable for all $q\in\R^3$, it is straightforward to verify that
\begin{equation*}
    \boI'(q)[\psi]=\boF'(q)[\psi]=-\int_0^T\psi\left(\partial_t A(t,q)+\nabla\Phi(t,q)\right)dt,\quad\hbox{for all }q\in\R^3, \psi\in\boW.
\end{equation*}
Hence, $\boI'(b)\neq0$.

Let us consider the open subset of $\boW$ defined by
\[\Omega=\{q\in\Lambda:\, \|\dq\|_\infty<1,\, \boI'(q)\not=0\},\]
so that $b\in\Omega$. It is well-known (see, e.g., \cite[Lemma 2.2]{Willem}) that there exists a locally Lipschitz pseudogradient field $V:\Omega\to \boW$ for $\boI$ in $\Omega$. That is, for any $q\in\Omega$, one has
\begin{align}
    &\|V(q)\|_\boW\leq 2\|\boI'(q)\|_{\boW^*},
    \\
    &\boI'(q)[V(q)]\geq \|\boI'(q)\|^2_{\boW^*},
\end{align}
where $\boW^*$ denotes the dual space of $\boW$. Consequently, the initial value problem
\begin{equation}\label{eq:ivproblem}
	\frac{dx}{d\tau}(\tau)=-V(x(\tau)),\quad x(0)=b,
\end{equation}
has a unique solution $x\in\boC^1([0,\delta);\Omega)$, for some $\delta\ll1$; see, for instance, \cite{Deim} for details. Furthermore, the chain rule and the properties of the pseudogradient field yield
\begin{equation}\label{eq:derivative-1}
	\frac{d}{d\tau}(\boI(x(\tau))= \boI'(x(\tau))\left[\frac{dx}{d\tau}(\tau)\right]= -\boI'(x(\tau))\left[V(x(\tau))\right]\leq -\|\boI'(x(\tau))\|_{\boW^*}^2,\quad\text{for all }\tau\in [0,\delta).
\end{equation}
Hence the function $\tau\mapsto \boI(x(\tau))$ is decreasing in $(0,\delta_0)$ for some $\delta_0\in [0,\delta)$, and we conclude that
\begin{equation}
	\boI(x(\tau))<\boI(x(0))=\boI(b)= 0,\quad\text{for all }\tau\in (0,\delta_0),
\end{equation}
which completes the proof.
\end{proof}

\section{Conclusive remarks}\label{Section-conclusions}

We may rephrase Theorem~\ref{thm:PLA} by stating that, for any admissible pair $(A,\Phi)\in\AP$ for which $\boI$ attains negative values, there exists a non-constant periodic solution to the Lorentz \eqref{eq:LFE}. Taking this as the reference minimization result,  Theorem~\ref{MainTheorem1}, Theorem~\ref{MainTheorem2} and Theorem~\ref{MainTheorem3} provide sufficient conditions ensuring that $\boI$ does attain negative values. We conclude this work by analyzing closely the extent of necessity of the conditions just mentioned.

To begin with, we recall that the decay conditions \eqref{cond:decayA} and \eqref{cond:decayphi} are physically motivated and, more specifically, they lead to the uniqueness of solution to the Maxwell equations. However, from a purely mathematical point of view, these decay assumptions are not strictly necessary  to have action minimizers, as shown in \cite{ABT}. 

On the other hand, recall that an admissible pair of potentials generates a non-zero electric field, which provides the convenient framework for studying relativistic dynamics. Even though the absence of electric field is not an obstruction itself for the existence of action minimizers, it is not difficult to find examples in the magnetostatic regime such that $\boI$ is non-negative everywhere, thus preventing the application of Theorem~\ref{thm:PLA}. More concretely, let $\Phi\equiv 0$, and let $A\in\boC^1(\R^3;\R^3)$ satisfy \eqref{statement:vanishingA} and $\|\nabla A\|_{\infty} <\frac{\pi}{2T}$. Notice that $(A,\Phi)\not\in\AP$ due to the fact that $\partial_t A + \nabla\Phi\equiv 0$. Then, Poincaré--Wirtinger inequality \eqref{eq:poincare} implies that
    \begin{align*}
        \boI(q)&=\Psi(q) + \int_0^T \dq\cdot (A(q)-A(\bar q))dt\geq\frac12\int_0^T|\dq|^2 dt - \frac{\pi}{2T}\int_0^T |\dq||\tilde q| dt
        \\
        &\geq \frac12\int_0^T|\dq|^2 dt - \frac{\pi}{2T}\left(\int_0^T |\dq|^2 dt\right)^\frac12\left(\int_0^T|\tilde q|^2 dt\right)^\frac12\geq 0,\quad\hbox{for all }q\in\boK,
    \end{align*}
and, since $\boI(b)=0$ for all $b\in\R^3$, it follows that $\min\limits_{q\in\boW}\boI(q)=\boI(b)=0$. Consequently, Theorem~\ref{thm:PLA} fails to be applicable in this situation, as every constant function is a global minimizer of the functional.

Next, we point out that the nature of the singularities allowed for the potentials in $\AP$ must be carefully constrained in order to expect the existence of a global minimizer of the action functional. Indeed, if $\varphi$, as defined in \eqref{def:varphi}, were unbounded from above, then $\boI$ would be unbounded from below, so that no global minimum could exist. 

Even more interesting is the possibility of having singularities in the vector potential. For instance, assume for simplicity that $\Phi\equiv 0$, and let $A\in\boC^1(\R\times(\R^3\setminus\{0\});\R)$ be $T$-periodic. Assume in addition that there exists $\delta>0$ such that
\[\partial_t A(\cdot,b)\not\equiv 0,\quad\text{ for all }b\in \R^3\hbox{ with }|b|\in(0,\delta)\]
and, furthermore,
\begin{equation}\label{div_oscillations}
\lim_{b\to 0}\frac{\|\dot g\|^2_2}{\|\dot g\|_\infty}=\infty,\quad\text{where}\ \dot g(t)=\tilde A(t,b).
\end{equation}
It is clear that, in such a case,
\[\frac{\|\dot g\|^2_2}{\|\dot g\|_\infty}\leq \sqrt{T}\|\dot g\|_2\leq T\|\dot g\|_\infty\to \infty,\text{ as }b\to 0.\]
Therefore, condition \eqref{div_oscillations} may be interpreted as a strong divergence of the oscillations of $A(t,b)$ as $b$ approaches the singularity. Arguing as in the proof of Lemma~\ref{lemma:negative}, we derive that the point $p=b-\varepsilon\tilde g$ defined in \eqref{def:p-eps} 
satisfies the inequality
\[\boI(p)\leq -\varepsilon \|\dot g\|_2^2\left(1-\varepsilon\frac{\pi+MT}{\pi}\right),\]
for every $\varepsilon>0$ small enough so that
$\varepsilon\|\dot g\|_\infty\leq 1.$ Therefore, taking $\varepsilon=1/\|\dot g\|_\infty$, we deduce
\[\boI(p)\leq -\frac{\|\dot g\|_2^2}{\|\dot g\|_\infty}\left(1-\frac{\pi+MT}{\pi \|\dot g\|_\infty}\right)\to-\infty,\quad\text{as }b\to 0.\]
We have thus obtained a curve $b\mapsto p$ along which $\boI$ is always negative and diverges to $-\infty$, demonstrating that global minimizers cannot exist in this setting. Therefore, in the absence of a scalar potential, singular vector potentials may lead to a breakdown of the minimization framework. 
\\\\
{\bf Acknowledgments}: M. Garzón is partially supported by the Spanish MCIN/AEI grant JDC2022-049490-I and project PID2021-128418NA-I00.
\\\\
{\bf Data Availability Statement:} Data sharing is not applicable to this article as no datasets were generated or analyzed during the current study.

\appendix
\section{Appendix}\label{appendix-lemma}
Here we prove that the velocity of any periodic solution to the Lorentz force equation is uniformly bounded in the case of a continuous electromagnetic field. In our framework, this corresponds with the choice of $\boS=\emptyset$. Consequently, for any $(A,\Phi)\in\AP$ in this situation, the critical points of the functional $\boI$ are contained in a subset of $\boK$ that is compactly embedded on it. Hence all such trajectories remain at a positive distance from the boundary $\partial K$, which represents the set of motions approaching the speed of light in vacuum. The result is the following.
\begin{lemma}\label{lemma:uniformbound}
	For any $(A,\Phi)\in\AP$ there exists $\rho\in(0,1)$ such that 
	\begin{equation}\label{def:setKo}
		\|\dq\|_\infty< \rho,
	\end{equation}
    for every periodic solution $q(t)$ of the Lorentz force equation \eqref{eq:LFE}.
\end{lemma}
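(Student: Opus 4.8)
The plan is to reduce the bound on $\|\dq\|_\infty$ to a bound on the Lorentz factor $\gamma(t):=\bigl(1-|\dq(t)|^2\bigr)^{-1/2}=\sqrt{1+|w(t)|^2}$, where $w:=\gamma\,\dq$ is the relativistic momentum, so that \eqref{eq:LFE} reads $\dot w=E(t,q)+\dq\times B(t,q)$. Asking for $\|\dq\|_\infty<\rho<1$ with $\rho$ independent of $q$ is the same as asking for $\max\gamma\le C$ with $C$ independent of $q$, so it suffices to bound $\max\gamma$. Since we are in the non-singular regime $\boS=\emptyset$, the fields $E=-\partial_tA-\nabla\Phi$ and $B=\nabla\times A$ are continuous, $T$-periodic in $t$, and decay uniformly at infinity by \eqref{cond:decayA} and \eqref{cond:decayphi}; hence they are bounded on $\R\times\R^3$, and I set $E_0:=\|E\|_\infty$, $B_0:=\|B\|_\infty$, $C_2:=2E_0+B_0$.

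First I would record an energy identity and an oscillation bound: pairing the equation with $\dq$ and using $\dq\cdot(\dq\times B)=0$ gives $\dot\gamma=\dq\cdot E(t,q)$, so $|\dot\gamma|<E_0$; as $\dq$, hence $\gamma$, is $T$-periodic, this yields $\max\gamma-\min\gamma\le TE_0$. Next I would differentiate $w=\gamma\,\dq$ to get $\gamma\,\dot\dq=\dot w-\dot\gamma\,\dq=E+\dq\times B-(\dq\cdot E)\dq$, whence, using $|\dq|<1$, $|\dot\dq(t)|\le C_2/\gamma(t)\le C_2/\min\gamma$. Finally, $T$-periodicity of $q$ forces $\int_0^T\dq\,dt=q(T)-q(0)=0$, so $\dq$ has zero mean; writing $\dq(t)=\tfrac1T\int_0^T(\dq(t)-\dq(s))\,ds$ and combining with the previous estimate gives $\|\dq\|_\infty\le\int_0^T|\dot\dq|\,dt\le TC_2/\min\gamma$.

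To conclude I would feed this inequality back into the trivial relation $\min\gamma\le\max\gamma=(1-\|\dq\|_\infty^2)^{-1/2}$. If $\min\gamma\le TC_2$ then already $\min\gamma\le\sqrt{1+(TC_2)^2}$, while if $\min\gamma>TC_2$ then $\|\dq\|_\infty\le TC_2/\min\gamma<1$ and $\min\gamma\le\bigl(1-(TC_2/\min\gamma)^2\bigr)^{-1/2}$, which after squaring gives $(\min\gamma)^2-(TC_2)^2\le 1$, i.e.\ again $\min\gamma\le\sqrt{1+(TC_2)^2}=:C_3$. The oscillation bound then gives $\max\gamma\le C_3+TE_0=:C_4$, hence $1-\|\dq\|_\infty^2\ge C_4^{-2}$, which yields $\|\dq\|_\infty<\rho$ for a suitable $\rho\in(0,1)$ depending only on $T,E_0,B_0$, hence only on $(A,\Phi)$.

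The main obstacle — and the reason the argument is not one line — is that, in contrast to the one-dimensional setting, there need not be any instant at which $\dq$ (equivalently $w$) vanishes, so one cannot simply integrate $|\dot w|\le E_0+B_0$ from such a point to control $|w|$. The device circumventing this is the two-stage estimate above: first control $\dot\dq$ by $1/\min\gamma$, then use the zero-mean property of $\dq$ to bound $\|\dq\|_\infty$ by $1/\min\gamma$, and only then close the loop via $\min\gamma\le\max\gamma$ together with the oscillation bound. A minor point to be checked carefully is the boundedness of $E$ and $B$ on all of $\R\times\R^3$, which follows from their continuity together with the uniform spatial decay and $T$-periodicity in time.
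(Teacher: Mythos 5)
Your argument is correct, but it takes a genuinely different route from the paper's. The paper's proof passes to the Hamiltonian form $\dq=p/\sqrt{1+|p|^2}$, $\dpp=-\nabla\Phi-\partial_t A+\dq\times\nabla\times A$, asserts that every $T$-periodic solution has an instant $t_0$ with $p(t_0)=\dq(t_0)=0$, and then simply integrates the momentum equation from $t_0$ to get $|p(t)|\le TC$ and hence $\|\dq\|_\infty\le TC/\sqrt{1+T^2C^2}=:\rho<1$. Your proof replaces that vanishing-instant step by the combination of (i) the energy identity $\dot\gamma=\dq\cdot E$, giving the oscillation bound $\max\gamma-\min\gamma\le TE_0$, (ii) the pointwise estimate $|\ddot q|\le C_2/\gamma$, (iii) the zero mean of $\dq$ over a period, and (iv) the bootstrap closing the loop through $\min\gamma\le\max\gamma=(1-\|\dq\|_\infty^2)^{-1/2}$; the inputs (boundedness of $E$ and $B$ from continuity, $T$-periodicity in $t$ and the decay \eqref{cond:decayA}, \eqref{cond:decayphi}) and the conclusion ($\rho$ depending only on $T$ and the field bounds) are the same. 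What the paper's route buys is brevity—essentially a three-line estimate once the zero of $p$ is granted; what yours buys is robustness: as you correctly observe, for an $\R^3$-valued periodic $q$ the components of $\dq$ need only vanish at possibly different times, so the simultaneous zero $\dq(t_0)=p(t_0)=0$ used in the paper is not guaranteed by periodicity alone, and your two-stage estimate supplies a bound that does not require it. In this sense your longer argument is not wasted effort but covers precisely the point the paper's proof leaves unjustified; your remark about boundedness of $\nabla\Phi$ on compact sets (joint regularity in $(t,x)$) is the same implicit assumption the paper itself makes when defining its constant $C$.
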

\begin{proof}
    First, since $\boS=\emptyset$, for any $(A,\Phi)\in\AP$, it is satisfied that
    \begin{equation}
        C:=\max\left\{\|\partial_t A\|_{L^\infty(\R^4)}, \max\limits_{(t,x)\in\R^4}|\textnormal{det}(\nabla A(t,x))|, \|\nabla\Phi\|_{L^\infty(\R^4)}\right\}<\infty,
    \end{equation}which follows from the decay conditions \eqref{cond:decayA} and \eqref{cond:decayphi}, and the regularity of the potentials.
    
   On the other hand, by introducing the relativistic momentum $p=\tfrac{\dq}{\sqrt{1-|\dq|^2}}$, the Lorentz force equation \eqref{eq:LFE} admits the following Hamiltonian formulation:
\begin{equation}\label{eq:LFEHamiltonian}
        \dq=\dfrac{p}{\sqrt{1+|p|^2}}, \quad
        \dpp=-\nabla\Phi(t,q)-\partial_t A(t,q)+\dq\times \nabla\times A(t,q). 
\end{equation}
Then, given a periodic solution $(q,p):[0,T]\to\R^6$ of \eqref{eq:LFEHamiltonian}, necessarily there exists a time $t_0\in[0,T]$ such that $\dq(t_0)=p(t_0)=0$. As a result, by integrating the second equation in \eqref{eq:LFEHamiltonian}, we directly obtain that
\begin{align*}
    |p(t)|&=\left|\int_0^t\left[\nabla\Phi(s,q(s))+\partial_t A(s,q(s))-\dq(s)\times \nabla\times A(s,q(s))\right]ds\right|\leq TC, \quad\hbox{for all }t\in[0,T],
\end{align*}
and using this bound in the first equation of \eqref{eq:LFEHamiltonian}, we conclude that
\begin{equation}
    |\dq(t)|\leq \dfrac{TC}{\sqrt{1+T^2C^2}} =:\rho<1,\quad\hbox{for all }t\in[0,T],
\end{equation}
which completes the proof.
\end{proof}

\end{document}